\newcommand{\sn}{\mathcal{S}_n}
\newcommand{\s}{\mathcal{S}}
\theoremstyle{definition}
\newtheorem{theorem}{Theorem} 
\newtheorem{proposition}[theorem]{Proposition}
\newtheorem{corollary}[theorem]{Corollary}
\newtheorem{definition}[theorem]{Definition}
\newtheorem*{remark*}{Remark}
\newtheorem{example}[theorem]{Example}
\newtheorem*{example*}{Example}
\newcommand{\bump}{\mathsf{bump}}
\newcommand{\bumpseq}{\text{bump sequence}}
\newcommand{\RSK}{\mathsf{RS}}
\newcommand{\SYT}{\mathsf{SYT}}
\newcommand{\Graph}{\mathsf{Graph}}
\newcommand{\prem}{\alpha}
\newcommand{\ds}{\displaystyle}
\newcommand{\Part}{\mathcal{P}}
\newcommand{\unpair}[1]{\langle #1 \rangle}
\newcommand{\shape}{\mathsf{shape}}
\newcommand{\weakbump}{\mathsf{weakbump}}
\newcommand{\run}{\mathsf{run}}
\newcommand{\GETOUT}[1]{}
\newcommand{\floor}[1]{\lfloor #1 \rfloor}
\newcommand{\jafar}[1]{\begin{minipage}{0.47\textwidth}\rule{0pt}{2em}#1\end{minipage}}
\newcommand{\sundial}{\hspace*{2em}}
\definecolor{dgreen}{rgb}{0,0.5,0}
\newcommand{\myt}{^{\mathsf{T}}}
\def\bumpdiagramone{
\begin{tikzpicture}[scale=0.8]
\draw[very thin,color=gray] (-0.1,-0.1) grid (5.3,5.3);
\foreach \Point in {(0.5,4.5), (1.5,0.5), (2.5,2.5), (3.5,1.5), (4.5,3.5)}{
	\node at \Point {\large{\bf 0}};
	}
\foreach \Point in {(1.5,4.5),(3.5,2.5) }{
	\node at \Point {\large{\bf 1}};
	}
\foreach \Point in {(3.5,4.5) }{
	\node at \Point {\large{\bf 2}};
	}
\foreach \x/\y/\l in {0.5/5/0,1.5/1/0,1.5/2/0,1.5/3/0,1.5/4/0,1.5/5/1,2.5/3/0,2.5/4/0,2.5/5/0,3.5/2/0,3.5/3/1,3.5/4/1,3.5/5/2,4.5/4/0,4.5/5/0}{
	\node at (\x,\y) [above=-2pt] {\tiny\textcolor{black}{\l}};
	}
\foreach \x/\y/\l in {2/0.5/0,3/0.5/0,4/0.5/0,5/0.5/0,4/1.5/0,5/1.5/0,3/2.5/0,4/2.5/1,5/2.5/1,5/3.5/0,1/4.5/0,2/4.5/1,3/4.5/1,4/4.5/2,5/4.5/2}{
	\node at (\x,\y) [right=-2pt] {\tiny\textcolor{black}{\l}};
	}
\end{tikzpicture}
}
\def\bumpdiagramtwo{
\begin{tikzpicture}[scale=0.8]
\draw[very thin,color=gray] (-0.1,-0.1) grid (9.3,9.3);
\foreach \Point in {(0.5,3.5), (1.5,6.5), (2.5,4.5), (3.5,2.5), (4.5,7.5), (5.5,1.5), (6.5,5.5), (7.5,8.5), (8.5,0.5)}{
	\node at \Point {\large{\bf 0}};
	}
\foreach \Point in {(2.5,6.5),(3.5,3.5),(5.5,2.5),(6.5,7.5),(8.5,1.5) }{
	\node at \Point {\large{\bf 1}};
	}
\foreach \Point in {(3.5,6.5), (5.5,3.5), (8.5,2.5) }{
	\node at \Point {\large{\bf 2}};
	}
\foreach \Point in {(5.5, 6.5), (8.5,3.5) }{
	\node at \Point {\large{\bf 3}};
	}
\foreach \Point in {(8.5,6.5) }{
	\node at \Point {\large{\bf 4}};
	}
\foreach \x/\y/\l in {0.5/4/0,0.5/5/0,0.5/6/0,0.5/7/0,0.5/8/0,0.5/9/0,1.5/7/0,1.5/8/0,1.5/9/0,2.5/5/0,2.5/6/0,2.5/7/1,2.5/8/1,2.5/9/1,3.5/3/0,3.5/4/1,3.5/5/1,3.5/6/1,3.5/7/2,3.5/8/2,3.5/9/2,4.5/8/0,4.5/9/0,5.5/2/0,5.5/3/1,5.5/4/2,5.5/5/2,5.5/6/2,5.5/7/3,5.5/8/3,5.5/9/3,6.5/6/0,6.5/7/0,6.5/8/1,6.5/9/1,7.5/9/0,8.5/1/0,8.5/2/1,8.5/3/2,8.5/4/3,8.5/5/3,8.5/6/3,8.5/7/4,8.5/8/4,8.5/9/4}{
	\node at (\x,\y) [above=-2pt]{\tiny\textcolor{black}{\l}};
	}
\foreach \x/\y/\l in {9/0.5/0,6/1.5/0,7/1.5/0,8/1.5/0,9/1.5/1,4/2.5/0,5/2.5/0,6/2.5/1,7/2.5/1,8/2.5/1,9/2.5/2,1/3.5/0,2/3.5/0,3/3.5/0,4/3.5/1,5/3.5/1,6/3.5/2,7/3.5/2,8/3.5/2,9/3.5/3,3/4.5/0,4/4.5/0,5/4.5/0,6/4.5/0,7/4.5/0,8/4.5/0,9/4.5/0,7/5.5/0,8/5.5/0,9/5.5/0,2/6.5/0,3/6.5/1,4/6.5/2,5/6.5/2,6/6.5/3,7/6.5/3,8/6.5/3,9/6.5/4,5/7.5/0,6/7.5/0,7/7.5/1,8/7.5/1,9/7.5/1,8/8.5/0,9/8.5/0}{
	\node at (\x,\y) [right=-2pt] {\tiny\textcolor{black}{\l}};
	}
\end{tikzpicture}
}
\def\bscale{0.37}
\def\shadowzero{
\begin{tikzpicture}[scale=\bscale]
\draw[very thin,color=gray] (-0.1,-0.1) grid (9.3,9.3);
\foreach \Point in {(1,4), (2,7), (3,5), (4,3), (5,8), (6,2), (7,6), (8,9), (9,1)}{
	\node at \Point {\textbullet};
	}
\foreach \x in {1,2,3,4,5,6,7,8,9}{
	\node at (\x,-0.5) {\tiny \textcolor{gray}{\x}};
	\node at (-0.5,\x) {\tiny \textcolor{gray}{\x}};
	}
\node[] at (5,-2) {\footnotesize$\Graph(\pi)$};
\end{tikzpicture}
}
\def\shadowone{
\begin{tikzpicture}[scale=\bscale]
\draw[very thin,color=gray] (-0.1,-0.1) grid (9.3,9.3);
\foreach \Point in {(1,4), (2,7), (3,5), (4,3), (5,8), (6,2), (7,6), (8,9), (9,1)}{
	\node at \Point {\textbullet};
	}
\foreach \x in {1,2,3,4,5,6,7,8,9}{
	\node at (\x,-0.5) {\tiny \textcolor{gray}{\x}};
	\node at (-0.5,\x) {\tiny \textcolor{gray}{\x}};
	}
\draw[thick] (1,9.3)--(1,4)--(4,4)--(4,3)--(6,3)--(6,2)--(9,2)--(9,1)--(9.3,1);
\draw[thick] (2,9.3)--(2,7)--(3,7)--(3,5)--(9.3,5);
\draw[thick] (5,9.3)--(5,8)--(7,8)--(7,6)--(9.3,6);
\draw[thick] (8,9.3)--(8,9)--(9.3,9);
\foreach \Point in {(4,4),(6,3),(9,2),(3,7),(7,8)}{
    \node at \Point {$\circ$};
}
\foreach \xlabel/\ylabel in {1/1,2/5,5/6,8/9}{
    \node at (\xlabel,9.7) {\tiny \bf\xlabel}; \node at (9.7,\ylabel) {\tiny \bf\ylabel};
}
\node[] at (5,-2) {\footnotesize$\textrm{1st shadows of }\Graph(\pi)$};
\end{tikzpicture}
}
\def\shadowtwo{
\begin{tikzpicture}[scale=\bscale]
\draw[very thin,color=gray] (-0.1,-0.1) grid (9.3,9.3);
\foreach \Point in {(4,4),(6,3),(9,2),(3,7),(7,8)}{
	\node at \Point {\textbullet};
	}
\foreach \x in {1,2,3,4,5,6,7,8,9}{
	\node at (\x,-0.5) {\tiny \textcolor{gray}{\x}};
	\node at (-0.5,\x) {\tiny \textcolor{gray}{\x}};
	}
\draw[thick] (3,9.3)--(3,7)--(4,7)--(4,4)--(6,4)--(6,3)--(9,3)--(9,2)--(9.3,2);
\draw[thick] (7,9.3)--(7,8)--(9.3,8);
\foreach \Point in {(4,7),(6,4),(9,3)}{
    \node at \Point {$\circ$};
}
\foreach \xlabel/\ylabel in {3/2,7/8}{
    \node at (\xlabel,9.7) {\tiny \bf\xlabel}; \node at (9.7,\ylabel) {\tiny \bf\ylabel};
}
\node[] at (5,-2) {\footnotesize$\textrm{2nd shadows of }\Graph(\pi)$};
\end{tikzpicture}
}
\def\shadowthree{
\begin{tikzpicture}[scale=\bscale]
\draw[very thin,color=gray] (-0.1,-0.1) grid (9.3,9.3);
\foreach \Point in {(4,7),(6,4),(9,3)}{
	\node at \Point {\textbullet};
	}
\foreach \x in {1,2,3,4,5,6,7,8,9}{
	\node at (\x,-0.5) {\tiny \textcolor{gray}{\x}};
	\node at (-0.5,\x) {\tiny \textcolor{gray}{\x}};
	}
\draw[thick] (4,9.3)--(4,7)--(6,7)--(6,4)--(9,4)--(9,3)--(9.3,3);
\foreach \Point in {(6,7),(9,4)}{
    \node at \Point {$\circ$};
}
\foreach \xlabel/\ylabel in {4/3}{
    \node at (\xlabel,9.7) {\tiny \bf\xlabel}; \node at (9.7,\ylabel) {\tiny \bf\ylabel};
}
\node[] at (5,-2) {\footnotesize$\textrm{3rd shadows of }\Graph(\pi)$};
\end{tikzpicture}
}
\def\shadowfour{
\begin{tikzpicture}[scale=\bscale]
\draw[very thin,color=gray] (-0.1,-0.1) grid (9.3,9.3);
\foreach \Point in {(6,7),(9,4)}{
	\node at \Point {\textbullet};
	}
\foreach \x in {1,2,3,4,5,6,7,8,9}{
	\node at (\x,-0.5) {\tiny \textcolor{gray}{\x}};
	\node at (-0.5,\x) {\tiny \textcolor{gray}{\x}};
	}
\draw[thick] (6,9.3)--(6,7)--(9,7)--(9,4)--(9.3,4);
\foreach \Point in {(9,7)}{
    \node at \Point {$\circ$};
}
\foreach \xlabel/\ylabel in {6/4}{
    \node at (\xlabel,9.7) {\tiny \bf\xlabel}; \node at (9.7,\ylabel) {\tiny \bf\ylabel};
}
\node[] at (5,-2) {\footnotesize$\textrm{4th shadows of }\Graph(\pi)$};
\end{tikzpicture}
}
\def\shadowfive{
\begin{tikzpicture}[scale=\bscale]
\draw[very thin,color=gray] (-0.1,-0.1) grid (9.3,9.3);
\foreach \Point in {(9,7)}{
	\node at \Point {\textbullet};
	}
\foreach \x in {1,2,3,4,5,6,7,8,9}{
	\node at (\x,-0.5) {\tiny \textcolor{gray}{\x}};
	\node at (-0.5,\x) {\tiny \textcolor{gray}{\x}};
	}
\draw[thick] (9,9.3)--(9,7)--(9.3,7);
\foreach \Point in {}{
    \node at \Point {$\circ$};
}
\foreach \xlabel/\ylabel in {9/7}{
    \node at (\xlabel,9.7) {\tiny \bf\xlabel}; \node at (9.7,\ylabel) {\tiny \bf\ylabel};
}
\node[] at (5,-2) {\footnotesize$\textrm{5th shadows of }\footnotesize\Graph(\pi)$};
\end{tikzpicture}
}
\title{A bump statistic on permutations resulting from the\\ Robinson-Schensted correspondence}
\author[M. Dukes]{Mark Dukes}
\address{UCD School of Mathematics and Statistics, University College Dublin, Dublin 4, Ireland} \email{mark.dukes@ucd.ie}
\author[A. Mullins]{Andrew Mullins}
\address{UCD School of Mathematics and Statistics, University College Dublin, Dublin 4, Ireland} \email{andrew.mullins@ucdconnect.ie}
\keywords{Robinson-Schensted correspondence; permutation statistics; bump; extremal permutation problem}
\subjclass[2020]{05A05; 05A15.}
\begin{document}
\begingroup
\def\uppercasenonmath#1{} 
\let\MakeUppercase\relax 
\maketitle
\endgroup

\begin{abstract}
In this paper we investigate a permutation statistic that was 
independently\footnotemark\ introduced by Romik in 2005. 
This statistic counts the number of bumps that occur during the execution of the Robinson-Schensted procedure when applied to a given permutation. We provide several interpretations of this bump statistic that include the tableaux shape and also as an extremal problem concerning permutations and increasing subsequences. 
Several aspects of this bump statistic are investigated from both structural and enumerative viewpoints.
\end{abstract}

\footnotetext{
After completing this paper we were made aware of a short paper~\cite{romik} by 
Dan Romik in which he showed that the average value of $\bump(\pi)$ over all $\pi \in \sn$ for $n$ large is approximately $128  n^{3/2} / 27\pi^2 $.}

\section{Introduction}
The Robinson-Schensted correspondence~\cite{rs} is a bijection from permutations to pairs of standard Young tableaux of the same shape.
This correspondence and its generalization, the Robinson-Schensted-Knuth correspondence, have become centrepieces of enumerative and algebraic combinatorics due to their many remarkable properties.
In this paper we introduce and investigate a statistic on permutations that is motivated by an aspect of the Robinson-Schensted (RS) correspondence.

Let $\sn$ be the set of all permutations of the set $\{1,\ldots,n\}$.
A {\it{standard Young tableau}} is a filling of the cells of a Young diagram $\lambda$ with integers $\{1,2,\ldots,n\}$ such that each of those integers is used exactly once and the entries are increasing across rows and down columns, e.g.
\begin{center}
    \scriptsize{
        \begin{ytableau}
            1 & 3 & 4 & 5 \\
            2 & 6 & 8 \\
            7 & 9 
        \end{ytableau}    
    }
\end{center}
In order to define the RS correspondence we must first define the insertion of an entry into an existing standard Young tableau.
Let $P$ be a standard Young tableau and $k$ a positive integer.
Let $P \leftarrow k$ be the outcome of the following procedure that inserts $k$ into $P$.
Starting with the top row of $P$:
\begin{enumerate}
\item[(I1)] If $k$ is greater than or equal to the rightmost entry in the current row, then append $k$ to that row. Should the current row be empty then $k$ becomes the solitary entry in that row.
\item[(I2)] Otherwise, determine the rightmost entry $y$ in the current row such that $k < y $. Insert $k$ into the position occupied by $y$. Set $k$ to be $y$, move down one row and {go to} step (I1). 
We say $k$ has {\it{bumped}} $y$ down a row.
\end{enumerate}
The Robinson-Schensted correspondence is now easily defined in terms of the above insertion operation:
let $\pi=\pi_1\cdots\pi_n \in \sn$ and set $(P_0,Q_0)=(\epsilon,\epsilon)$ to be the pair of empty standard Young tableaux.
\begin{itemize}
\item Given the current pair $(P_i,Q_i)$ we create the next pair $(P_{i+1},Q_{i+1})$ by setting $P_{i+1} = P_i \leftarrow \pi_{i+1}$ and forming $Q_{i+1}$ from $Q_i$ by inserting $i+1$ into the cell that is the difference $P_{i+1}\backslash P_i$.
\item The outcome of repeatedly applying the procedure for $i=0,\ldots,n-1$ will be $(P_n,Q_n)=:(P,Q)$. 
\end{itemize}
We will write $\RSK(\pi) =(P,Q)$.
Notice that, by construction, both $P$ and $Q$ will have the same Young tableau shape that we will call $\shape(\pi)$.

\begin{example}\label{exampleone}
Consider the permutation $\pi=475382691 \in \s_9$.
The above procedure applied to $\pi$ produces:
\begin{equation*}
\footnotesize
\begin{array}{rrcl}
\multicolumn{3}{r}{
	\left(\epsilon,\epsilon\right) 
	~ \to ~ \Yvcentermath1 \left(~ \young(4) ~,~ \young(1) ~ \right) 
	~ \to ~ \Yvcentermath1 \left(~ \young(47) ~,~ \young(12) ~ \right) 
	~ \to ~ \Yvcentermath1 \left(~ \young(45,7) ~,~ \young(12,3) ~ \right) 
	~\to} 
	& \Yvcentermath1 \left(~ \young(35,4,7) ~,~ \young(12,3,4) ~ \right)\\[2em]
&&& \multicolumn{1}{c}{\downarrow} \\[1em]
& \multicolumn{1}{c}{\Yvcentermath1 \left(~ \young(256,38,4,7) ~,~ \young(125,37,4,6) ~ \right)}
& \multicolumn{1}{c}{ \leftarrow ~~~~ \Yvcentermath1 \left(~ \young(258,3,4,7) ~,~ \young(125,3,4,6) ~ \right)
 ~~~~~~~\leftarrow} & \Yvcentermath1 \left(~\young(358,4,7) ~,~ \young(125,3,4) ~ \right)\\[3em]
& \multicolumn{1}{c}{\downarrow} && \\[1em]
& 
	\Yvcentermath1 \left(~ \young(2569,38,4,7) ~,~ \young(1258,37,4,6) ~ \right)
 & \multicolumn{2}{l}{\to ~ \Yvcentermath1 \left(~ \young(1569,28,3,4,7) ~,~ \young(1258,37,4,6,9) ~ \right)   ~~~\,\,=~ (P,Q).}
\end{array}
\end{equation*}
\end{example}
Here we have $\shape(\pi)=(4,2,1,1,1)$. \newline

The RS correspondence is typically presented in the above form and the result of each $(P_i,Q_i)\mapsto (P_{i+1},Q_{i+1})$ step is shown. 
It is less common to present and consider how the tableaux are changing in terms of bumps, or indeed keep a count of their number. 
Considering what happens in Example~\ref{exampleone}, we see that the insertion of $\pi_3=5$ into the tableau $\tiny\young(47)$ causes $7$ to be bumped down one row, so the insertion of $\pi_3=5$ causes one bump.
Following this, the insertion of $\pi_4=3$ into $\tiny \Yvcentermath1 \young(45,7)$ causes $4$ to be bumped down one row. 
As a result of this, $7$ in turn is bumped. So the insertion of $\pi_4=3$ causes two bumps.
Do this for all nine entries of $\pi$ to find the sequence of the number of bumps for $\pi_1,\pi_2,\ldots,\pi_9$ is $(0,0,1,2,0,3,1,0,4)$.

\begin{definition}
Given $\pi \in \sn$, let $\bump(\pi)$ be the number of bumps that occur in the application of the RS correspondence to $\pi$.
\end{definition}

In Example~\ref{exampleone} above we have $\bump(\pi)=11$. 
In the remainder of this paper we will make use of two well-known results that we now recall.

\begin{theorem}[Greene's theorem \protect{\cite[A1.1.1]{stanley2}}]
Suppose $\pi \in \sn$ with $\shape(\pi)=\lambda=(\lambda_1,\lambda_2,\ldots)$ and let $\lambda'$ be the conjugate partition to $\lambda$.
The conjugate partition is defined by $\lambda'=(\lambda'_1,\lambda'_2,\ldots)$ where $\lambda'_i := |\{ j ~:~ \lambda_j \geq i\}|$.
Let $I_k(\pi)$ (resp. $D_k(\pi)$) denote the maximal number of elements in a union of $k$ increasing (resp. decreasing) subsequences of $\pi$. 
Then 
\begin{align}
I_k(\pi) &= \lambda_1+\ldots+\lambda_k, \mbox{ and} \label{greenestheoremone}\\
D_k(\pi) &= \lambda'_1+\ldots+\lambda'_k.     \label{greenestheoremtwo}
\end{align}
\end{theorem}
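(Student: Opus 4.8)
The plan is to prove only the increasing statement \eqref{greenestheoremone}; the decreasing statement \eqref{greenestheoremtwo} then follows by applying \eqref{greenestheoremone} to the reversed permutation $\pi_n\pi_{n-1}\cdots\pi_1$, whose increasing subsequences are exactly the decreasing subsequences of $\pi$ and whose insertion shape is the conjugate $\lambda'$ (a standard symmetry of the correspondence). For \eqref{greenestheoremone} I would induct on $k$. The base case $k=1$ is Schensted's theorem: following the top row of the successive tableaux $P_0,P_1,\dots,P_n$ through the $n$ insertions shows that its final length $\lambda_1$ equals the length of a longest increasing subsequence of $\pi$; this can be taken as known.

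For the inductive step I would use Viennot's geometric form of the Robinson--Schensted map. Plot the $n$ points $(i,\pi_i)$ in the plane; sweeping shadows out of the southwest repeatedly produces shadow lines $L_1,\dots,L_m$, each an $x$- and $y$-monotone staircase, together with their northeast corners. Two structural facts drive everything. First, a point $(i,\pi_i)$ lies on $L_j$ exactly when a longest increasing subsequence of $\pi$ ending at position $i$ has length $j$; hence the points on a single $L_j$ form a decreasing subsequence, an increasing subsequence meets each $L_j$ at most once, and $m=\lambda_1$, which already settles $k=1$. Second, the northeast corners of $L_1,\dots,L_m$, relabelled by their rank in each coordinate, are the plot of a permutation $\pi'$ with $\shape(\pi')=(\lambda_2,\lambda_3,\dots)$; iterating the construction on $\pi'$ reconstructs the whole shape $\lambda$ row by row.

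Granting these facts, the lower bound $I_k(\pi)\ge\lambda_1+\cdots+\lambda_k$ would be handled constructively. By the inductive hypothesis $\pi'$ carries $k-1$ pairwise disjoint increasing subsequences of total size $\lambda_2+\cdots+\lambda_k$; each corner used by them lies on some $L_j$ flanked by a genuine point of $\pi$ to its left and another below it, and replacing corners by flanking points, consistently so that order is preserved, converts these into $k-1$ pairwise disjoint increasing subsequences of $\pi$ that can be arranged to avoid a chosen transversal $S$ meeting each $L_j$ once; since $S$ is itself an increasing subsequence of length $m=\lambda_1$, together they give what we need. For the upper bound $I_k(\pi)\le\lambda_1+\cdots+\lambda_k$ one runs this in reverse: given a union $U=C_1\cup\cdots\cup C_k$ of $k$ increasing subsequences, each $C_i$ meets each $L_j$ at most once, and one pushes $U$ across the shadow lines, keeping a single representative on each $L_j$ that $U$ meets and reassigning every other point of $U\cap L_j$ to a neighbouring corner, to obtain a union $U'$ of at most $k$ increasing subsequences of $\pi'$ with $|U|\le\#\{\,j:U\cap L_j\ne\emptyset\,\}+|U'|\le\lambda_1+|U'|$; the inductive hypothesis then bounds $|U'|$ by $\lambda_2+\cdots+\lambda_k$.

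The step I expect to be the main obstacle is making the two transfers between $\pi$ and $\pi'$ precise: verifying that the lifted points really do form pairwise disjoint increasing chains, and that the pushed-down points really do assemble into at most $k$ increasing chains of $\pi'$ with nothing counted twice. This is exactly the content of Viennot's lemma on chains passing through successive layers of the shadow diagram, and it rests on the fact that distinct shadow lines are nested and doubly monotone. Rather than reproduce that analysis I would cite \cite[A1.1.1]{stanley2}.
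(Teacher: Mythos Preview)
The paper does not supply a proof of Greene's theorem at all: it is stated as a background result and attributed to \cite[A1.1.1]{stanley2}, then used as a black box in later arguments. So there is no ``paper's own proof'' to compare against.

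Your sketch via Viennot's shadow construction is a recognised route to Greene's theorem, and the structural facts you isolate (points on a single shadow line form a decreasing chain; an increasing chain meets each shadow line at most once; the northeast corners encode a permutation of shape $(\lambda_2,\lambda_3,\dots)$) are the right ingredients. That said, your proposal is really an outline rather than a proof: the two transfer steps between $\pi$ and $\pi'$---lifting $k-1$ disjoint increasing chains from the corners to genuine points while keeping them disjoint from a length-$\lambda_1$ transversal, and pushing an arbitrary union of $k$ chains down to the corner permutation without overcounting---are exactly where the work lies, and you explicitly defer them to \cite[A1.1.1]{stanley2}. Since that is the same citation the paper invokes, you have in effect matched the paper's treatment (cite Stanley) while adding a helpful roadmap of what such a proof would contain. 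If you actually want to \emph{prove} the theorem rather than cite it, those transfer lemmas need to be written out; otherwise, simply citing \cite[A1.1.1]{stanley2} as the paper does is sufficient for the purposes here.
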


\begin{theorem}[Hook length formula \protect{\cite[Cor. 7.21.6]{stanley2}}]\label{hooklengthformula}
For a Young diagram $\lambda\vdash n$, let $f^{\lambda}$ be the number of distinct {standard Young tableaux} of shape $\lambda$ 
and let $h(c)$ be the \textit{hook length} of cell $c$ of $\lambda$. This quantity $h(c)$ is one plus the number of cells beneath $c$ in that column plus the number of cells to the right of $c$ in that row. Then:
\begin{align*}
f^{\lambda}=\frac{n!}{\prod_{c \in \lambda} h(c)}.
\end{align*}
\end{theorem}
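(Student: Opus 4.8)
The plan is to establish the equivalent identity $f^{\lambda}=F(\lambda):=n!/\prod_{c\in\lambda}h(c)$ by induction on $n$, using the probabilistic ``hook walk'' of Greene, Nijenhuis and Wilf. The base case $n=1$ is immediate. For the inductive step, observe that in any standard Young tableau of shape $\lambda\vdash n$ the largest entry $n$ must occupy a corner cell (one with no cell below it in its column and none to its right in its row), and erasing that cell produces a standard Young tableau of shape $\lambda\setminus c$; hence $f^{\lambda}=\sum_{c}f^{\lambda\setminus c}$ with $c$ ranging over the corners of $\lambda$. By the inductive hypothesis it therefore suffices to prove
\begin{equation*}
\sum_{c\text{ a corner of }\lambda}\frac{F(\lambda\setminus c)}{F(\lambda)}=1.
\end{equation*}

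The right-hand side invites a probabilistic reading as a statement of total probability, and the hook walk supplies the sample space: choose a cell of $\lambda$ uniformly at random; while the current cell $c$ is not a corner, move to a cell chosen uniformly among the $h(c)-1$ other cells in the hook of $c$ (those strictly below $c$ in its column or strictly to its right in its row); halt upon reaching a corner. The crux is the claim that this walk terminates at a prescribed corner $c=(a,b)$ with probability exactly $F(\lambda\setminus c)/F(\lambda)$. I would prove it by conditioning on the set $A$ of rows and the set $B$ of columns visited, so that $a=\max A$ and $b=\max B$: a short induction on the number of steps shows that the walk has row-projection $A$ and column-projection $B$ with probability $\tfrac1n\prod_{i\in A\setminus\{a\}}\tfrac1{h(i,b)-1}\prod_{j\in B\setminus\{b\}}\tfrac1{h(a,j)-1}$. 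Summing over all admissible $A\ni a$ and $B\ni b$ lets each index above $c$ in column $b$, and each index left of $c$ in row $a$, either contribute its factor or not, so the sum collapses to $\tfrac1n\prod_{i<a}\bigl(1+\tfrac1{h(i,b)-1}\bigr)\prod_{j<b}\bigl(1+\tfrac1{h(a,j)-1}\bigr)$. Finally, comparing hook lengths of $\lambda$ and $\lambda\setminus c$ — only those in row $a$ or column $b$ drop, each by one, and $h(a,b)=1$ since $c$ is a corner — identifies this product with $F(\lambda\setminus c)/F(\lambda)$; summing over the corners $c$ then yields $1$, because the hook walk always stops somewhere.

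The main obstacle is exactly that central claim: the factorization of the projection probability and the collapse of its sum. The factorization must be extracted from the transition rule by a careful induction — conditioned on the projections, the ``go right'' and ``go down'' choices at successive cells interleave so that the row-part and the column-part of the probability separate, and keeping the conditioning consistent at each step is the delicate point. Recognizing the telescoped product as $F(\lambda\setminus c)/F(\lambda)$ is elementary but needs patient bookkeeping of how deleting a corner changes hook lengths. Everything else — the reduction to the recurrence, the probabilistic reformulation, and the final cancellation — is routine.

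For what it is worth, the statement cited here is obtained in Stanley's book by a different route, via the principal specialization of the Schur function $s_{\lambda}$ and its product formula (a $q$-analogue of the hook length formula) evaluated at $q=1$; there is also the purely bijective proof of Novelli, Pak and Stoyanovsky. Any of these would serve equally well, and since the formula is used later only as a black box I would in practice simply invoke it as stated.
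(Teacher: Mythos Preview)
The paper does not prove this theorem at all: it is stated with a citation to Stanley~\cite[Cor.~7.21.6]{stanley2} and then used as a black box (only once, to evaluate $f^{(n-k,k)}$ in the computation of $B_n^{321}(q)$). So there is no ``paper's own proof'' to compare against.

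Your sketch of the Greene--Nijenhuis--Wilf hook walk is correct. The recursion $f^{\lambda}=\sum_{c}f^{\lambda\setminus c}$ over corners is standard; the factorisation of the projection probability into $\tfrac{1}{n}\prod_{i\in A\setminus\{a\}}\tfrac{1}{h(i,b)-1}\prod_{j\in B\setminus\{b\}}\tfrac{1}{h(a,j)-1}$ is exactly the GNW lemma, and its sum over subsets $A\ni a$, $B\ni b$ telescopes as you describe to $\tfrac{1}{n}\prod_{i<a}\tfrac{h(i,b)}{h(i,b)-1}\prod_{j<b}\tfrac{h(a,j)}{h(a,j)-1}$, which does match $F(\lambda\setminus c)/F(\lambda)$ since removing a corner lowers hook lengths by one precisely along its row and column. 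You correctly flag the only genuinely delicate step (the inductive proof of the factorisation), and you also correctly observe that Stanley's cited proof goes instead through the principal specialisation of $s_{\lambda}$. Either route is fine; for the purposes of this paper, simply invoking the result, as you suggest at the end, is exactly what the authors do.
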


In this paper we will look at some properties and interpretations of this new $\bump$ statistic.
In Section 2, we provide three different interpretations of $\bump(\pi)$. 
The first of these is a function of the tableau shape $\shape(\pi)$ while the second is in terms of Viennot's geometric construction of the Robinson-Schensted correspondence. 
{The third is in terms of its minimal distance from being the union of a prescribed number of increasing subsequences, where the distance mentioned here is the number of terms that must be removed. We show that $\bump(\pi)$ is intimately related to this extremal problem for permutations.}

In Section 3 we define bump sequences that encode the number of bumps that result from the insertion of each permutation element. We show that the entries of this sequence can also be related to an extremal problem for permutations and also relate this bump sequence to the descent set of the permutation.
In addition to this, we introduce bump diagrams that are an alternative method to determining the bump sequence (and consequently the bump) of a permutation.
These diagrams were motived by {\it{growth diagrams}}.

In Section 4 we introduce the {\it{weakbump sequence}} of a permutation. 
This weakbump sequence is essentially an indicator function on each component of the bump sequence of a permutation. 
We show the weakbump statistic, which is defined as the sum of the entries in the weakbump sequence, has several nice properties: it admits a simple expression in terms of $\shape(\pi)$; it coincides with the bump statistic for permutations that are 321-avoiding; it also equals the run statistic that appeared in the recent paper of Gunawan, Pan, Russell, and Tenner~\cite{tenner}.

In Section 5 we examine the generating function of the bump statistic over both the sets of permutations and the set of Young tableaux and prove several results for these polynomials.
When we restrict the generating function to be over the set of 321-avoiding permutations we can give a precise formula for the polynomials and also prove log-concavity of their coefficients.

\section{The bump statistic from three different viewpoints}

In this section we will give characterisations of the bump statistic of a permutation in three different ways. 
The first of these is, upon consideration, a straightforward proof that the bump of a permutation is a weighted sum of the partition lengths of $\shape(\pi)$.

\subsection{Bump in terms of tableau shape}

\begin{proposition}[Bump in terms of tableau shape]
\label{bumpinshape}Given $\pi \in \sn$, let $(P,Q) = \RSK(\pi)$ and suppose that $\shape(\pi) = \lambda=(\lambda_1, \ldots,\lambda_k) \vdash n$.
Then 
\begin{align*}
    \bump(\pi) = \sum_i (i-1) \lambda_i.    
\end{align*}
\end{proposition}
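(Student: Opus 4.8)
The plan is to track, for each element $\pi_j$ being inserted, exactly how many bumps its insertion causes, and then sum over $j$. The key observation is that when $\pi_j$ is inserted into $P_{j-1}$, the insertion path visits some consecutive block of rows, say rows $1$ through $r$, and lands a new cell in row $r$; the number of bumps caused is exactly $r-1$, since a bump occurs each time the insertion moves from one row down to the next (steps (I1)--(I2) in the definition). Therefore
\begin{align*}
\bump(\pi) = \sum_{j=1}^n \bigl( (\text{row in which }\pi_j\text{'s insertion terminates}) - 1 \bigr).
\end{align*}

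**Next**, I would reinterpret this sum using the recording tableau $Q$. By construction, $Q$ records at step $j$ the cell $P_j \backslash P_{j-1}$, which is precisely the new cell created in row $r$ at step $j$. So the multiset of termination rows over $j=1,\ldots,n$ is exactly the multiset of row indices of the cells of $Q$ (equivalently of $P$, since $P$ and $Q$ share the shape $\lambda$). A shape $\lambda=(\lambda_1,\ldots,\lambda_k)$ has exactly $\lambda_i$ cells in row $i$, so
\begin{align*}
\bump(\pi) = \sum_{i} (\#\text{ of cells of }\lambda\text{ in row }i)\cdot(i-1) = \sum_i (i-1)\lambda_i,
\end{align*}
which is the claimed identity.

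**The one point that needs care** — and what I'd regard as the main (though modest) obstacle — is justifying cleanly that an insertion whose path ends in row $r$ contributes exactly $r-1$ bumps, with no double-counting and no off-by-one error: the insertion enters row $1$ for free, and each transition "row $i \to$ row $i+1$" for $i=1,\ldots,r-1$ is caused by exactly one bump (the element $y$ displaced in row $i$), after which the process terminates upon appending in row $r$. This is an easy induction on the number of rows visited, but it is the crux of the argument; everything else is bookkeeping. I would also remark that this gives an independent proof that $\bump$ depends only on $\shape(\pi)$, a fact that will be used freely in the sequel.
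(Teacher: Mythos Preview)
Your proof is correct and essentially matches the paper's. The only cosmetic difference is that the paper organizes the count from the $P$-side (each value ending in row $r$ of $P$ was bumped $r-1$ times, phrased as ``the number of values bumped from row $i$ is $\lambda_{i+1}+\cdots+\lambda_k$''), whereas you organize it from the $Q$-side (each insertion step landing a new cell in row $r$ causes $r-1$ bumps); since $P$ and $Q$ share shape $\lambda$, both viewpoints immediately give $\sum_i (i-1)\lambda_i$.
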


\begin{proof}
Let $\pi, P, Q$, and $\lambda$ be as stated in the theorem. 
According to rules (I1) and (I2) of the RS correspondence, each value in $P$ was first inserted into the first row and then potentially bumped one row at a time further down the tableau. 
We count the total number of bumps by summing the number of values that were bumped from each row. 
Consider row $i$ of $P$. The number of values that were bumped from this row is precisely the number of cells in $P$ beneath row $i$. 
Letting $\bump(\pi,i)$ denote this number,
i.e. $\bump(\pi,i) = \lambda_{i+1}+\lambda_{i+2}+\ldots+\lambda_k$, 
we have that $$\bump(\pi) = \sum_{i=1}^{k-1} \bump(\pi,i) = \sum_{i =1 }^{k-1} (\lambda_{i+1}+\lambda_{i+2}+\ldots+\lambda_k) = \sum_i (i-1) \lambda_i.$$
\end{proof}

Proposition~\ref{bumpinshape} also appears in the main proof of Romik's paper~\cite{romik}.
After discovering this formula for $\bump(\pi)$, we noticed that this quantity $\sum (i-1)\lambda_i$ appears in several places in the algebraic combinatorics literature (see e.g. {Stanley's book}~\cite[Theorem 7.21.2]{stanley2} and {Bergeron's book}~\cite[Eqn 4.29]{bergeron}).
Unfortunately for us, it only does so since it represents the smallest possible weight of a semi-standard Young tableau whose $i+1$th row consists of all $i$'s. 
In each appearance it essentially tells us the first exponent in some power series that has a non-zero coefficient.

As $\bump(\pi)$ is only a function of the shape $\lambda$ of the associated tableaux $\RSK(\pi)$, we will later in the paper sometimes abuse notation where it does not cause confusion and write $\bump(\lambda)$ for that same value.

\subsection{Bump in terms of Viennot's shadows}
Viennot~\cite{viennot77} gave a beautiful geometric description of the RS correspondence in terms of {\it{shadows}}. 
Let us first explain this construction and then show what aspect of the construction represents the bump statistic. 
Suppose $\pi=\pi_1\ldots\pi_n \in \sn$. 
Consider the collection of points 
$\Graph(\pi) = \{(i,\pi_i) ~:~ i \in [n]\}$ in the plane.
Let us place a solid dot at each of these points. 
From each solid dot, extend lines north and east until these lines encounter other lines, or just pass outside of the $n\times n$ grid.
We call these lines the {\it{1st shadows}}.
Let the first row of $P$ (resp. first row of $Q$) be the $y$-index (resp. $x$-index) of where the 1st shadow lines leave the right hand side (resp. top) of the $n\times n$ grid. 

Next, we insert circles on the intermediate points of the 1st shadows where these {\it{intermediate points}} are those corners that did not have solid dots to begin with.
We repeat the same process for these circles to yield a collection of lines that we call the {\it{2nd shadow}}.
The second rows of $P$ and $Q$ are now read in the same manner by observing where the 2nd shadows exit the $n\times n$ grid.
Repeat this process until it is not possible to form any more shadows. The pair $(P,Q)$ constructed in this way is equal to $\RSK(\pi)$ 
(see e.g. Sagan's book~\cite[Section 3.6]{sagan}).
A benefit of this geometric construction is that it makes it easy to observe the classic result $\RSK(\pi^{-1})=(Q,P)$.

\begin{example}\label{exampleoneshadows}
Figure~\ref{figviennot} illustrates Viennot's construction for the permutation $\pi=475382691$ (of Example~\ref{exampleone}). 
We can read off the pair $(P,Q)$ by listing the pairs of (vertical labels, 
horizontal labels) at each step. In this example, 
for the 1st shadows of $\pi$, the vertical labels are (1,5,6,9) while the horizontal labels are (1,2,5,8). 
These correspond to the first rows of $P$ and $Q$ given in Example~\ref{exampleone}.
\begin{figure}
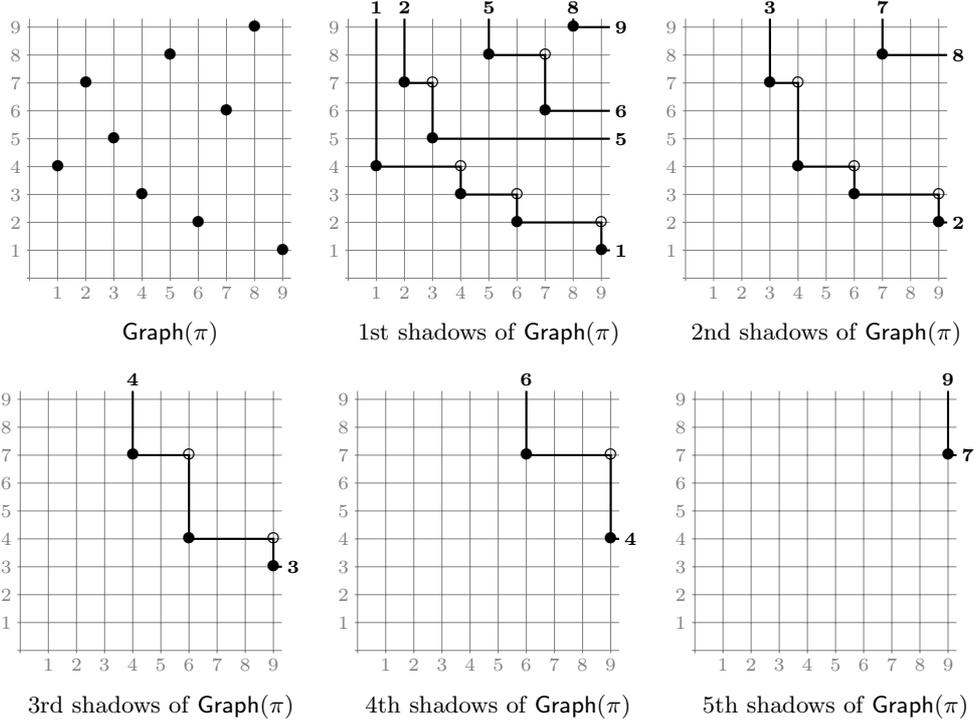

\begin{center}
\shadowzero
\shadowone
\shadowtwo
\end{center}
\begin{center}
\shadowthree
\shadowfour
\shadowfive
\end{center}
\caption{The construction of Viennot's shadows for Example~\ref{exampleoneshadows}.} \label{figviennot}
\end{figure}
\end{example}

\begin{proposition}[Bump in terms of Viennot's shadows]
Let $\pi \in \sn$. 
Consider the construction of the pair $(P,Q)$ using Viennot's method of shadows.
The statistic $\bump(\pi)$ then equals the number of {\it intermediate points} that appear in all of the shadows.
\end{proposition}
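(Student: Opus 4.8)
The plan is to count the intermediate points one shadow at a time and telescope. Write $\ell_k$ for the number of distinct shadow lines making up the $k$-th shadow and $d_k$ for the number of solid dots present at the start of stage $k$, so that $d_1=n$ and, by definition of the construction, the dots at stage $k+1$ are exactly the intermediate points produced at stage $k$. Since the $k$-th shadow is what produces the $k$-th rows of $P$ and $Q$, and every shadow line is a staircase with a single vertical ray leaving the top of the grid and a single horizontal ray leaving the right of the grid (contributing one entry to the $k$-th row of $Q$ and one to the $k$-th row of $P$ respectively), we have $\ell_k=\lambda_k$.

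The key local observation I would record is that a single shadow line through $m$ solid dots carries exactly $m-1$ intermediate points. Traversing such a line from its northwest end to its eastward end, one first meets a vertical ray, then the solid dots and intermediate points strictly alternate — dot, intermediate point, dot, \ldots, dot — and finally a horizontal ray; concretely, listing the solid dots of the line by increasing $x$-coordinate (equivalently, decreasing $y$-coordinate), the $j$-th intermediate point is the meeting point of the eastward ray from the $j$-th dot and the northward ray from the $(j{+}1)$-st dot. Here one uses that $\pi$ is a permutation, so no two dots share a row or a column and the shadow lines are genuine lattice staircases. Summing over the $\ell_k$ lines of the $k$-th shadow, whose solid dots partition the $d_k$ dots present at that stage, the number of intermediate points created at stage $k$ is $d_k-\ell_k=d_k-\lambda_k$. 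As these are the dots of the next stage, $d_{k+1}=d_k-\lambda_k$, and hence $d_k=n-(\lambda_1+\cdots+\lambda_{k-1})=\lambda_k+\lambda_{k+1}+\cdots$.

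Summing over all stages, the total number of intermediate points is
\[
\sum_{k\ge 1}(d_k-\lambda_k)\;=\;\sum_{k\ge 1}\Bigl(\sum_{j\ge k}\lambda_j-\lambda_k\Bigr)\;=\;\sum_{k\ge 1}\sum_{j>k}\lambda_j\;=\;\sum_{j\ge 1}(j-1)\lambda_j,
\]
which equals $\bump(\pi)$ by Proposition~\ref{bumpinshape}. I would also point out the pleasant match with the proof of that proposition: the number of intermediate points created at stage $k$ is $d_k-\lambda_k=\lambda_{k+1}+\lambda_{k+2}+\cdots=\bump(\pi,k)$, the number of values bumped out of row $k$ during the insertion procedure, so the statement is really a repackaging of Proposition~\ref{bumpinshape} stage by stage. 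There is no serious obstacle: the only points requiring care are the alternation claim on a single line and the identity $\ell_k=\lambda_k$, and both follow directly from the definition of Viennot's shadow construction together with the fact that $\pi$ is a permutation.
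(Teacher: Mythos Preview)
Your proof is correct and follows essentially the same stage-by-stage strategy as the paper: at stage $k$ the intermediate points number $d_k-\lambda_k=\lambda_{k+1}+\lambda_{k+2}+\cdots$, which is exactly the count of values bumped out of row $k$, and summing over $k$ gives $\bump(\pi)$. The paper phrases this slightly more bijectively (each non-exiting dot's horizontal ray terminates at a unique intermediate point, so intermediate points at stage $k$ correspond one-to-one with values bumped from row $k$), whereas you reach the same per-stage count via the ``$m$ dots on a line yield $m-1$ corners'' observation and then invoke Proposition~\ref{bumpinshape}; but as you yourself note at the end, the two arguments are the same stage by stage.
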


\begin{proof}
{We begin with $n$ points on the graph. 
The points with horizontal shadow lines extending east outside of the grid correspond to the first row of $P$. 
Therefore all of the other points correspond to values that are bumped down from the first row of the $P$-tableau to the second row during the RS correspondence. 
The horizontal lines extending from these points end at the \textit{intermediate points}, 
and so the count of the \textit{intermediate points} of the first shadow lines is equal to the count of the number of values bumped out of the first row of the $P$-tableau. 
The same reasoning holds for the second shadow lines (i.e. the count of the number of \textit{intermediate points} of the 2$^\text{nd}$ shadow lines is equal to the number of times values are bumped from the 2$^\text{nd}$ row of the $P$-tableau). 
Continuing this reasoning, we ultimately conclude that the number of \textit{intermediate points} that appear in all of the shadows is equal to the total number of bumps that occur during the RS correspondence, i.e. $\bump(\pi)$. Bergeron discusses this in Section 2.5 of his book \cite{bergeron}.
}
\end{proof}

We know from Proposition~\ref{bumpinshape} that $\bump(\pi)$ depends only on the shape of the tableaux in $\RSK(\pi)$. 
Therefore the observation that $\RSK(\pi^{-1})=(Q,P)$ leads to the conclusion that $\bump(\pi)=\bump(\pi^{-1})$.

\subsection{Bump in terms of a {permutation distance property}}
{In this section we show that the bump statistic on a permutation is intimately related to its minimal distance from being a union of increasing subsequences.}

\begin{theorem}[Bump in terms of a {permutation distance property}]
Let $\pi \in \sn$. 
Let $\prem_i(\pi)$ be the minimal number of elements that one could remove from $\pi$ and it be the case
that what remains is the disjoint union of $i$ increasing subsequences.
Then:
\begin{align}
    \bump(\pi) = \sum_{i\geq 1} \prem_i(\pi)
\end{align}
\end{theorem}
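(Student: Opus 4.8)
The plan is to evaluate $\prem_i(\pi)$ explicitly in terms of $\shape(\pi)=\lambda=(\lambda_1,\ldots,\lambda_k)$ by means of Greene's theorem, then sum over $i$ and recognise the outcome as the formula for $\bump(\pi)$ from Proposition~\ref{bumpinshape}.

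First I would reduce $\prem_i(\pi)$ to a covering problem. Removing the fewest elements so that what remains is a disjoint union of $i$ increasing subsequences is the same as \emph{keeping} the most elements that can be partitioned into $i$ increasing subsequences. The only subtlety is the word \emph{disjoint}: given any $i$ increasing subsequences $S_1,\ldots,S_i$ of $\pi$ regarded as sets of positions, replacing $S_j$ by $S_j\setminus(S_1\cup\cdots\cup S_{j-1})$ yields pairwise disjoint sets with the same union, and each remains increasing since a subsequence of an increasing sequence is increasing (and we may pad with empty subsequences if fewer than $i$ are needed). Hence the maximal size of a disjoint union of $i$ increasing subsequences of $\pi$ equals $I_i(\pi)$, and therefore $\prem_i(\pi)=n-I_i(\pi)$.

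Next, apply Greene's theorem, equation~\eqref{greenestheoremone}: $I_i(\pi)=\lambda_1+\cdots+\lambda_i$, with the convention $\lambda_j=0$ for $j>k$. Since $n=\lambda_1+\cdots+\lambda_k$, this gives $\prem_i(\pi)=\lambda_{i+1}+\cdots+\lambda_k$ for $1\le i\le k-1$ and $\prem_i(\pi)=0$ for $i\ge k$; in particular the sum $\sum_{i\ge 1}\prem_i(\pi)$ is finite. Summing, $\sum_{i\ge 1}\prem_i(\pi)=\sum_{i=1}^{k-1}(\lambda_{i+1}+\cdots+\lambda_k)$, in which each $\lambda_j$ with $j\ge 2$ is counted once for each $i\in\{1,\ldots,j-1\}$, i.e. $j-1$ times, so the total is $\sum_j (j-1)\lambda_j=\bump(\pi)$ by Proposition~\ref{bumpinshape}. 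Equivalently, one recognises $\sum_{i=1}^{k-1}(\lambda_{i+1}+\cdots+\lambda_k)$ as exactly $\sum_{i=1}^{k-1}\bump(\pi,i)$ appearing in the proof of that proposition.

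The genuine content lies entirely in the first step — the disjointification argument together with the invocation of Greene's theorem; everything afterwards is bookkeeping, so I do not expect a serious obstacle. The only points requiring care are the convention $\lambda_j=0$ for $j$ beyond the number of parts, and checking that $\prem_i(\pi)$ vanishes for $i\ge k$ so that the displayed infinite sum is well defined.
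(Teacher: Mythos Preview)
Your proposal is correct and follows essentially the same route as the paper: identify $\prem_i(\pi)=n-I_i(\pi)$, apply Greene's theorem to get $\prem_i(\pi)=\sum_{j>i}\lambda_j$, and then reorganise the double sum into $\sum_j (j-1)\lambda_j=\bump(\pi)$ via Proposition~\ref{bumpinshape}. The only difference is that you spell out the disjointification argument justifying $\prem_i(\pi)=n-I_i(\pi)$, whereas the paper treats this equality as immediate from the definition.
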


\begin{proof}
Recall from {(\ref{greenestheoremone})} that $I_{k}(\pi)$ denotes the maximal number of elements in a union of $k$ increasing subsequences of $\pi \in \sn$. Then, by (\ref{greenestheoremone}),
$$I_{k}(\pi) = \lambda_{1}+\dots+\lambda_{k}$$ 
As $\pi$ has $n$ elements, by definition of $\prem_i(\pi)$ we therefore have that:
$$\alpha_{i}(\pi) = n - {I_{i}(\pi)} = \sum_{j \geq i+1}{\lambda_{j}}$$
Recalling that $\bump(\pi) = \displaystyle\sum_{j\geq2} (j-1){\lambda_{j}}$ gives:
$$\bump(\pi) = \sum_{i \geq 2}{\sum_{j \geq i}{\lambda_{j}}} = \sum_{i \geq2}{\alpha_{i-1}{(\pi)}} = \sum_{i \geq 1}{\alpha_{i}{(\pi)}}.$$
\end{proof}

\begin{example}
Consider $\pi=475382691$ (of Example~\ref{exampleone}). 
We have that $\prem_1(\pi)$ is the minimal number of elements that one could remove from $\pi$ and it be the case
that what remains is an increasing subsequence. 
There is no increasing subsequence of length 5 in $\pi$, but there are several of length 4. One example is the removal of the elements 7, 3, 2, 6, and 1. Thus $\prem_1(\pi)=5$.

Next, $\prem_2(\pi)$ is the the minimal number of elements that one could remove from $\pi$ and it be the case
that what remains is a union {of} two increasing subsequences. 
There is no single element we can remove from $\pi$ and for what remains to be the union of two increasing subsequences.
Likewise, by inspection there is no pair of elements we can remove from $\pi$ that will result in a union of two increasing subsequence.
However, there are quite a few triples we can remove from $\pi$ and what will be left will be the union of a length-4 with a length-2 increasing subsequence, or the union of two disjoint length-3 increasing subsequences. One example is the removal of the elements 7, 3, and 1.
This gives $\prem_2(\pi) = 3$.

By the same reasoning, we find $\prem_3(\pi)=2$ and $\prem_4(\pi)=1$, and $\prem_5(\pi)=0$ since it is already the union of 5 increasing subsequences.
Therefore $\bump(\pi)=5+3+2+1+0=11$.
\end{example}

Finally, $\prem_i(\pi)$ can be tied back to Viennot's shadow construction by observing that $\prem_i(\pi)$ is the number of intermediate points in the $i$th shadow of $\pi$.

\section{Bump sequences and bump diagrams}
\subsection{Bump sequences} \label{bumpsequences}
Suppose $\pi=\pi_1\ldots\pi_n\in \sn$ and let $(P_j,Q_j)$ correspond to the the $j$th tableaux pair in the construction of $\RSK(\pi)$.
When we construct $(P_{i},Q_{i})$ from $(P_{i-1},Q_{i-1})$ by inserting $P_{i-1} \leftarrow \pi_{i}$, 
what is the number of bumps caused by this insertion in terms of a permutation property of $\pi_1\ldots\pi_{i}$?

\begin{definition} \label{definitionten}
Let $\bump_{i}(\pi)$ be the number of bumps that occur during $P_{i-1} \leftarrow \pi_{i}$. 
We call the sequence $(\bump_{1}(\pi),\ldots,\bump_n(\pi))$ the {\textit{bump sequence}} of $\pi$.
\end{definition}

Notice that the sum $\bump_1(\pi)+\ldots+\bump_n(\pi) = \bump(\pi)$.
First we observe that $\bump_{i}(\pi)$ is equal to one less than the row {index} of $i$ in $Q_i$ and hence also in $Q:=Q_n$.
This means that $\bump_i(\pi)$ can be read from $Q$ after the RS process has been run, if desired. 

\begin{proposition}\label{abcd}
$\bump_{i}(\pi)$ is one less than the smallest value of $j$  such that the  maximal size of the union of $j$ increasing subsequences in $\pi_1\ldots\pi_{i-1}\pi_i$ is greater than that of $\pi_1\ldots\pi_{i-1}$.
\end{proposition}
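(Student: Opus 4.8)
The plan is to track how the common shape of the tableaux pair changes when $\pi_i$ is inserted, and then to read off \emph{both} $\bump_i(\pi)$ and the increasing-subsequence quantity from that shape via Greene's theorem. Write $\mu = \shape(\pi_1\cdots\pi_{i-1})$ and $\lambda = \shape(\pi_1\cdots\pi_i)$, so that $P_{i-1}$ has shape $\mu$ and $P_i = P_{i-1}\leftarrow\pi_i$ has shape $\lambda$. A basic structural property of Schensted insertion is that $\lambda$ is obtained from $\mu$ by adjoining exactly one cell; let $r$ denote the row of that new cell (equivalently, the row index of the entry $i$ in $Q_i$, which is precisely where this new cell is recorded).

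First I would show that $\bump_i(\pi) = r-1$. Following rules (I1) and (I2), inserting $\pi_i$ into the first row of $P_{i-1}$ either appends it there — in which case $r = 1$ and no bump occurs — or it bumps an entry down into row $2$; that entry is then either appended to row $2$ (so $r = 2$ after exactly one bump) or bumps an entry into row $3$, and so on. Inductively, a bump occurs at each of the rows $1,\dots,r-1$ and the insertion terminates by placing a cell in row $r$, so the number of bumps caused by $P_{i-1}\leftarrow\pi_i$ is $r-1$. This is exactly the observation recorded just before the proposition, that $\bump_i(\pi)$ is one less than the row index of $i$ in $Q_i$.

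Next I would apply Greene's theorem, in the form $(\ref{greenestheoremone})$, to the two prefixes $\pi_1\cdots\pi_{i-1}$ and $\pi_1\cdots\pi_i$: for every $j\ge 1$ the maximal size of a union of $j$ increasing subsequences equals $\mu_1+\cdots+\mu_j$ and $\lambda_1+\cdots+\lambda_j$ respectively. Since $\lambda$ differs from $\mu$ only in row $r$, where $\lambda_r = \mu_r + 1$, the difference $(\lambda_1+\cdots+\lambda_j) - (\mu_1+\cdots+\mu_j)$ equals $0$ when $j < r$ and equals $1$ when $j \ge r$. Hence the smallest $j$ for which the maximal size of a union of $j$ increasing subsequences of $\pi_1\cdots\pi_i$ strictly exceeds that of $\pi_1\cdots\pi_{i-1}$ is precisely $r$; combining this with the previous paragraph yields $\bump_i(\pi) = r-1$, which is one less than that smallest $j$, as claimed.

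There is no genuine obstacle here: the only point requiring care is the standard fact that Schensted insertion enlarges the shape by a single cell and that the row of this cell is simultaneously (a) one more than the length of the bumping chain and (b) the position recorded in $Q_i$; once this is in hand, the rest is a one-line consequence of Greene's theorem together with the nesting $\mu \subset \lambda$. If a fully self-contained treatment were wanted, one could alternatively invoke the shape-based bookkeeping used in the proof of Proposition~\ref{bumpinshape}, but reading the bumping chain directly is the cleanest route.
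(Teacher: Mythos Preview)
Your proof is correct and follows essentially the same route as the paper: compare the shapes $\shape(\pi_1\cdots\pi_{i-1})$ and $\shape(\pi_1\cdots\pi_i)$, identify $\bump_i(\pi)$ as one less than the row index of the single added cell, and then invoke Greene's theorem on the partial sums of the two shape partitions. The only cosmetic differences are your (swapped) naming of $\lambda$ and $\mu$ and your more explicit justification of why the bumping chain has length $r-1$, which the paper essentially records just before the statement.
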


\begin{proof}
Consider now $\alpha=\pi_1\ldots\pi_{i-1}$ and $\beta=\pi_1\ldots\pi_{i-1}\pi_i$. 
Let {$(P_{i-1},Q_{i-1})$} (resp. {$(P_{i},Q_{i})$}) be the pair of tableaux that correspond to $\alpha$ (resp. $\beta$).
Since $\beta$ is $\alpha$ with a single value appended, we find that the shape of {$P_{i-1}$} differs to the shape of {$P_{i}$} only in that the latter has a cell appended to {an outer corner of $P_{i-1}$}.
Suppose $\lambda$ and $\mu$ are the partitions (i.e. weakly descending sequences of row lengths) corresponding to these shapes, which are almost the same except for one entry.
Then 
\begin{align*}
\bump_{i}(\pi)  
	&= {\min} \{ j: \lambda_j<\mu_j\} -1 \\
	&= {\min} \{ j: \lambda_1+\ldots+\lambda_j<\mu_1+\ldots+\mu_j\} -1 .
\end{align*}
By (\ref{greenestheoremone}), the quantity $\lambda_1+\ldots+\lambda_j$ is the maximal size of the union of $j$ increasing subsequences in $\alpha$, 
and $\mu_1+\ldots+\mu_j$ is the maximal size of the union of $j$ increasing subsequences in $\beta$.
Thus the quantity ${\min} \{ j: \lambda_1+\ldots+\lambda_j<\mu_1+\ldots+\mu_j\}$ is the smallest value of $j$ such that 
the  maximal size of the union of $j$ increasing subsequences in $\beta$ is greater than that of $\alpha$. 
\end{proof}

In particular, if $\bump_i(\pi)=0$ then any longest increasing subsequence of $\pi_1\ldots\pi_i$ is longer than any longest increasing subsequence of $\pi_1\ldots\pi_{i-1}$, and moreover they all end with $\pi_i$.

\begin{proposition}
{The value} $i$ is a {descent} of $\pi$ 
iff $\bump_i(\pi) < \bump_{i+1}(\pi)$.
\end{proposition}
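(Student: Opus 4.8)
The plan is to translate the inequality $\bump_i(\pi)<\bump_{i+1}(\pi)$ into a statement about the recording tableau $Q=Q_n$ and then read it off from the standard row‑bumping lemma for RS insertion. Recall from the discussion just before the proposition that $\bump_i(\pi)$ is one less than the row index of the cell of $Q$ containing $i$; write this cell as $c_i$, and similarly $c_{i+1}$ for the cell containing $i+1$. Since a cell of $Q$ never moves once created, the inequality $\bump_i(\pi)<\bump_{i+1}(\pi)$ is equivalent to the assertion that $c_{i+1}$ lies in a strictly lower row than $c_i$. By the definition of the RS correspondence, $c_i$ is precisely the box created by the insertion $P_{i-1}\leftarrow\pi_i$, and $c_{i+1}$ is the box created by the subsequent insertion $P_i\leftarrow\pi_{i+1}$, where $P_i=P_{i-1}\leftarrow\pi_i$. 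So $c_i$ and $c_{i+1}$ are the new boxes produced by two successive row insertions into the same tableau.

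Next I would invoke the row‑bumping lemma (see e.g. Fulton's \emph{Young Tableaux}, §1.1, or Sagan's book, §3.6): if one inserts $x$ and then $x'$ into a tableau, then when $x\le x'$ the box added by $x'$ is strictly to the right of, and weakly above, the box added by $x$, whereas when $x>x'$ the box added by $x'$ is strictly below, and weakly to the left of, the box added by $x$. Applying this with $x=\pi_i$ and $x'=\pi_{i+1}$ — which are distinct since $\pi$ is a permutation — gives exactly the dichotomy we need: if $\pi_i<\pi_{i+1}$ then the row index of $c_{i+1}$ is at most that of $c_i$, so $\bump_{i+1}(\pi)\le\bump_i(\pi)$; and if $\pi_i>\pi_{i+1}$ then $c_{i+1}$ lies strictly below $c_i$, so $\bump_i(\pi)<\bump_{i+1}(\pi)$. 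Since $i$ is a descent of $\pi$ exactly when $\pi_i>\pi_{i+1}$, both implications follow.

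The only real point to be careful about — and the main thing to get right — is the direction of the bumping‑lemma inequality: a \emph{smaller} newly inserted value lands weakly higher while a \emph{larger} one can only land weakly lower, which is why the equivalence is with a descent rather than an ascent. Everything else is bookkeeping. If a self‑contained argument is preferred over citing the bumping lemma, one can instead argue from Proposition~\ref{abcd}, using that $\bump_i(\pi)+1$ is the least $j$ for which appending $\pi_i$ strictly increases the maximal size of a union of $j$ increasing subsequences of $\pi_1\cdots\pi_{i-1}$, and tracking how this least index changes between the $(i-1)$st and $i$th steps according to whether $\pi_i$ can extend a longest increasing subsequence ending at $\pi_{i-1}$. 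As a sanity check I would verify the claim against $\pi=475382691$, where $\mathrm{Des}(\pi)=\{2,3,5,8\}$ and the bump sequence $(0,0,1,2,0,3,1,0,4)$ strictly increases precisely at the positions $2,3,5,8$.
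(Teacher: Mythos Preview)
Your argument is correct and is essentially the paper's approach: both reduce the claim to the well-known fact that $i$ is a descent of $\pi$ iff $i$ lies in a strictly higher row of $Q$ than $i+1$, and then observe that the row index of $i$ in $Q$ is $1+\bump_i(\pi)$. The only difference is that the paper cites this descent-in-$Q$ fact directly (from B\'ona), whereas you supply its proof via the row-bumping lemma.

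One small slip to fix: in your ``careful'' paragraph the sentence ``a \emph{smaller} newly inserted value lands weakly higher while a \emph{larger} one can only land weakly lower'' is backwards relative to your (correctly stated) bumping lemma --- a smaller second insertion lands strictly \emph{lower}, which is exactly why a descent forces $\bump_{i+1}(\pi)>\bump_i(\pi)$. This does not affect the validity of your main argument.
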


\begin{proof}
Let $\pi \in \sn$ and suppose $(P,Q)=\RSK(\pi)$.
It is a well established fact that an element $i$ is a descent of a permutation $\pi$ iff $i$ is in a row above $i+1$ in the tableau $Q$. 
(See e.g. {Bona's book}~\cite[Theorem 7.15]{bona}.)
From the proof of Proposition~\ref{abcd} above, we noted that the row index of $Q$ that contains $i$ is 
{$1+\bump_i(\pi)$} while the row index of $Q$ that contains $i+1$ is 
{$1+\bump_{i+1}(\pi)$}.
\end{proof}

\begin{corollary}
Let $\pi\in \sn$ and $(P,Q)=\RSK(\pi)$. 
The sequence $(\bump_1(\pi),\bump_2(\pi),\ldots,\bump_n(\pi))$ is the sequence of row indices (less one) of where the numbers $1,2,\ldots,n$ appear in the tableau $Q$.
\end{corollary}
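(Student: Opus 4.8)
The plan is to recognise that this corollary is just a restatement of the observation recorded immediately after Definition~\ref{definitionten}, so the real task is to spell out carefully why $\bump_i(\pi)$ equals the row index of $i$ in $Q$ minus one. I would argue this directly from the insertion mechanics. First, recall how a single insertion $P_{i-1}\leftarrow\pi_i$ proceeds under rules (I1)--(I2): it starts in the top row, and at each stage it either terminates by appending a value into the current row (rule (I1)), or it bumps an entry and moves down by exactly one row (rule (I2)). Hence if the insertion ultimately creates a new cell in row $r$ of $P_i$, then precisely $r-1$ applications of (I2) — that is, $r-1$ bumps — took place, so $\bump_i(\pi)=r-1$.

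Second, I would invoke the definition of the recording tableau in the RS correspondence: $Q_i$ is obtained from $Q_{i-1}$ by writing the value $i$ into the unique cell $P_i\setminus P_{i-1}$. By the previous step, that cell lies in row $r=1+\bump_i(\pi)$, so the number $i$ occupies row $1+\bump_i(\pi)$ of $Q_i$.

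Third, I would note that once a cell of the recording tableau has been filled it is never altered in any later step; passing from $Q_i$ to $Q_{i+1},\ldots,Q_n=Q$ only ever adds new cells. Therefore the row index of $i$ in $Q$ is the same as its row index in $Q_i$, namely $1+\bump_i(\pi)$. Collecting these equalities over $i=1,\ldots,n$ gives exactly the claimed identification of $(\bump_1(\pi),\ldots,\bump_n(\pi))$ with the sequence of row indices (less one) of $1,2,\ldots,n$ in $Q$.

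I do not anticipate a genuine obstacle here; the one point that deserves a sentence of justification is the identity ``number of bumps $=$ (row of the newly created cell) $-1$'', which rests on the fact that rule (I2) descends by exactly one row each time it fires. Alternatively, one could deduce the corollary from Proposition~\ref{abcd} by re-expressing the ``smallest $j$ for which the maximal $j$-increasing-union strictly grows'' in terms of the shape of $Q_i$ via Greene's theorem, but the argument above via the insertion mechanics is shorter and fully self-contained.
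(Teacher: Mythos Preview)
Your argument is correct and is essentially the same as the paper's: the paper records, immediately after Definition~\ref{definitionten}, the observation that $\bump_i(\pi)$ is one less than the row index of $i$ in $Q_i$ (and hence in $Q$), and later in the proof of the descent proposition restates this as ``the row index of $Q$ that contains $i$ is $1+\bump_i(\pi)$''. You have simply spelled out the insertion-mechanics justification for that observation in more detail than the paper does, which is fine.
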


For example, since the $Q$ tableau for the permutation $\pi=475382691$ is
$$\young(1258,37,4,6,9)$$
we have
$$(\bump_1(\pi),\bump_2(\pi),\ldots,\bump_9(\pi)) = (0,0,1,2,0,3,1,0,4).$$

We define a \textit{weak ballot sequence} to be a finite sequence $a_1,a_2,\dots,a_n$ of non-negative integers with the property that: for all non-negative integers $i,j$ with $i<j$ and all $k\leq n$,
$$|\{a_{\ell}  ~:~ a_{\ell}=i \mbox{ and }\ell \leq k \}| \geq |\{ a_{\ell} ~:~ a_{\ell}=j \mbox{ and } \ell \leq k \}|.$$
In other words, if we consider the terms of the sequence to correspond to an ordered list of votes for election candidates then, at all times during the counting of the votes, candidate 0 is (weakly) ahead of candidate 1, who is (weakly) ahead of candidate 2, and so on.
\begin{proposition}
The set of all bump sequences of length $n$ equals the set of weak ballot sequences of length $n$.
\end{proposition}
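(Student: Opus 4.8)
The plan is to recognize this as the classical correspondence between standard Young tableaux and ballot (Yamanouchi) sequences, accessed through the Corollary above. First I would reduce the claim to a statement about tableaux alone. By the Corollary, the bump sequence of $\pi$ is $(r_1-1,\dots,r_n-1)$, where $r_i$ is the index of the row of $Q$ containing $i$ and $(P,Q)=\RSK(\pi)$. Since $\RSK$ is a bijection from $\sn$ onto the set of pairs of standard Young tableaux of the same shape, and since $f^\lambda\ge 1$ for every $\lambda\vdash n$, every standard Young tableau $Q$ with $n$ cells arises as the second coordinate of some $\RSK(\pi)$ (for instance the unique $\pi$ with $\RSK(\pi)=(Q,Q)$). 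Hence the set of bump sequences of length $n$ is exactly $\{\,(r_1^Q-1,\dots,r_n^Q-1) : Q \text{ a standard Young tableau with } n \text{ cells}\,\}$, where $r_i^Q$ denotes the row of $i$ in $Q$, and it remains to show this set equals the set of weak ballot sequences of length $n$. I would also record the elementary observation that in the definition of a weak ballot sequence it suffices to check the displayed inequality for $j=i+1$, the general case following by the chain $|\{a_\ell=i,\ell\le k\}|\ge|\{a_\ell=i+1,\ell\le k\}|\ge\cdots\ge|\{a_\ell=j,\ell\le k\}|$.

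For the inclusion of bump sequences into weak ballot sequences, fix a standard Young tableau $Q$ with $n$ cells and set $a_i=r_i^Q-1$. For each $k\in[n]$ the cells of $Q$ with entry at most $k$ form a Young diagram $\mu^{(k)}$ (immediate from the row- and column-increasing property of $Q$), and $|\{\ell\le k : a_\ell=v\}|$ is exactly $\mu^{(k)}_{v+1}$, the length of row $v+1$ of $\mu^{(k)}$. Since $\mu^{(k)}$ is a partition, $\mu^{(k)}_{v+1}\ge \mu^{(k)}_{v+2}$, which is precisely the weak ballot inequality for the pair $(v,v+1)$.

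For the reverse inclusion, given a weak ballot sequence $(a_1,\dots,a_n)$ I would construct $Q$ greedily: having already placed $1,\dots,i-1$ so that the filled cells have shape $\mu$ (so $\mu_{v+1}=|\{\ell\le i-1 : a_\ell=v\}|$), place $i$ at the end of row $a_i+1$, i.e. in the cell in row $a_i+1$ and column $\mu_{a_i+1}+1$. The content of the argument is the verification that this placement is legal, namely that when $a_i\ge 1$ the row $a_i$ already contains at least $\mu_{a_i+1}+1$ cells. This is where the ballot condition enters essentially: applying it to the prefix of length $k=i$ and the pair $(a_i-1,a_i)$ gives $\mu_{a_i}=|\{\ell\le i : a_\ell=a_i-1\}|\ge |\{\ell\le i : a_\ell=a_i\}|=\mu_{a_i+1}+1$, the crucial $+1$ coming from the term $\ell=i$. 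Thus the new cell is the last in its row and has a filled cell directly above it, so adjoining $i$ there keeps the region a Young diagram and keeps every row and column strictly increasing; after all $n$ steps $Q$ is a standard Young tableau whose associated row-minus-one sequence is $(a_1,\dots,a_n)$. The main obstacle is exactly this last step — in particular, noticing that one must invoke the ballot condition at prefix length $i$, not $i-1$, to upgrade the weak inequality $\mu_{a_i}\ge\mu_{a_i+1}$ to the strict inequality $\mu_{a_i}>\mu_{a_i+1}$ that makes the placement valid and the column strictly increasing.
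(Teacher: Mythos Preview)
Your proof is correct and follows essentially the same route as the paper's: both directions go through the identification of bump sequences with the row-index-minus-one sequences of $Q$-tableaux, and the reverse direction constructs $Q$ by appending $i$ to row $a_i+1$ and then uses $\RSK^{-1}(Q,Q)$ to produce a permutation. Your argument is in fact more careful than the paper's in the reverse direction: the paper simply asserts that the weak ballot property keeps the row lengths weakly decreasing and hence the columns strictly increasing, whereas you explicitly verify (via the ballot inequality at prefix length $i$, picking up the crucial $+1$ from the term $\ell=i$) that the cell directly above the new one is already occupied.
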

\begin{proof}
{
First we show that a bump sequence is a weak ballot sequence.
Suppose the bump sequence $(\bump_i(\pi))_{i=1,\ldots,n}$
 associated with some $\pi \in \sn$ is not a weak ballot sequence and let the $k^{\text{th}}$ term be the first term in the sequence that violates the weak ballot sequence property. 
Let $(P,Q)=\RSK(\pi)$.
Recalling that $\bump_i(\pi)$ is equal to one less than the row index of $i$ in $Q$, this implies that in row 
$(\bump_k(\pi)+1)$ of $Q_k$ there must be more cells than in some row of lower index. 
This contradicts the fact that $Q_k$ is a standard Young tableau, which has weakly decreasing row lengths. Thus $(\bump_i(\pi))_{i=1,\ldots,n}$ is a weak ballot sequence.
}

{
Next we show that every weak ballot sequence is the bump sequence of some permutation.
A weak ballot sequence $(a_k)_{k=1,\ldots,n}$ can readily be mapped to a Q-tableau, $Q$ say, as follows: 
starting from the empty tableau, as $i$ ranges from 1 to $n$, append the value $i$ to row $(a_i +1)$ of $Q$. 
This creates a tableau whose row values are strictly increasing by construction. 
At each step of the process the weak ballot sequence property ensures that the row lengths are in weakly descending order, and hence the column values must be strictly increasing. 
So $Q$ is a standard Young tableau. 
Now set $P=Q$ to obtain a valid standard Young tableau pair $(P,Q)$.
As the RS process is reversible 
we can find some $\sigma \in \sn$ with the property $\RSK(\sigma)=(P,Q)$. 
Hence the bump sequence of $\sigma$ is the required weak ballot sequence.
}
\end{proof}

\subsection{Bump diagrams}

Inspired by growth diagrams~\cite{fomingd,krattgd}, in this subsection
we present \textit{bump diagrams}, from which one can read the sequence $(\bump_{i}(\pi))_{i=1}^{n}$, and hence also $\bump(\pi)$. 
For $\pi \in \sn$, begin by constructing an $n\times n$ grid {of squares} and fill the cells $(i,\pi_{i})_{i=1,\ldots,n}$ with $0$s. 
{Note that we are using Cartesian coordinates for cell positions.}
{Next, consider each cell in the first (i.e. bottom) row, moving from left to right. For each cell, execute the five rules (in order) that are given in Figure~\ref{bdr}. Repeat these steps for the second row of the diagram (again moving from left to right). And so on.}

{Once complete, the edges along the top of the diagram will contain the values $(\bump_{i}(\pi))_{i=1}^{n}$ and the edges along the right hand side of the diagram will tell us how many times the value associated with that row gets bumped during the RSK process (put differently, the edge-value plus $1$ of row $i$ is the row of the $P$-tableau containing $\pi_i$).}

{The proof of these statements can be shown by a relatively easy induction on $n$, the length of the permutation. Given a permutation $\pi = \pi_1 \dots \pi_n$, the inductive step involves taking the bump diagram that would be produced by the permutation $\pi' = \pi_1 \dots \pi_{n-1}$ (perhaps doing some relabelling) and then expanding it to include $\pi_n$; it is  not hard to show that the stated rules propagate the desired edge values to the right hand side of the diagram, and that the top of the new column will also contain the desired value.}

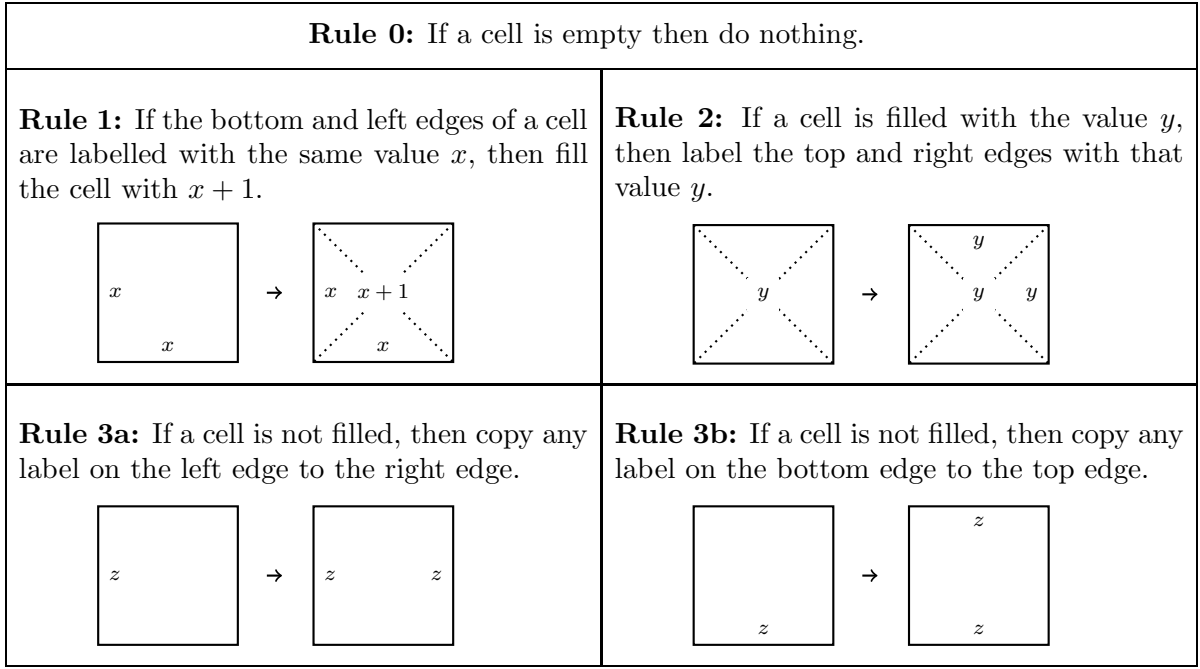
\begin{figure}[!h]
\begin{center}
\begin{tabular}{|c|c|} \hline
\multicolumn{2}{|c|}{\textbf{Rule 0:} If a cell is empty then do nothing.\phantom{$\displaystyle\int$}} \\ \hline
\jafar{\textbf{Rule 1:} If the bottom and left edges of a cell are labelled with the same value $x$, then fill the cell with $x+1$.
\ \\
\sundial\begin{tikzpicture}[font=\scriptsize,thick]
  \foreach \name/\angle in {a/135,b/45,c/315,d/225}
    \node (\name) at (\angle:1.5) {};
    \draw (b.south west) -- (a.south east) --
          node[right=0pt](h){$x$}(d.north east) -- node[above=0pt](v){$x$} (c.north west) -- cycle;
\draw[->] (1.3,0.0) --+ (0.2,0);
\end{tikzpicture}
\begin{tikzpicture}[font=\scriptsize,thick]
  \foreach \name/\angle in {a/135,b/45,c/315,d/225}
    \node (\name) at (\angle:1.5) {};
    \draw (b.south west) -- (a.south east) --
          node[right=0pt](h){$x$}(d.north east) -- node[above=0pt](v){$x$} (c.north west) -- cycle;
    \draw[dotted] (b.south west) -- node[pos=0.5,fill=white]{\textbf{$x+1$}} (d.north east);
    \draw[dotted] (a.south east) -- node[pos=0.5,fill=white]{\textbf{$x+1$}} (c.north west);
\end{tikzpicture}
}
&
\jafar{\textbf{Rule 2:} If a cell is filled with the value $y$, then label the top and right edges with that value $y$.
\ \\
\sundial\begin{tikzpicture}[font=\scriptsize,thick]
  \foreach \name/\angle in {a/135,b/45,c/315,d/225}
    \node (\name) at (\angle:1.5) {};
    \draw (b.south west) -- (a.south east) --
          (d.north east) --  (c.north west) -- cycle;
    \draw[dotted] (b.south west) -- node[pos=0.5,fill=white]{\textbf{$y$}} (d.north east);
    \draw[dotted] (a.south east) -- node[pos=0.5,fill=white]{\textbf{$y$}} (c.north west);
\draw[->] (1.3,0.0) --+ (0.2,0);
\end{tikzpicture}
\begin{tikzpicture}[font=\scriptsize,thick]
  \foreach \name/\angle in {a/135,b/45,c/315,d/225}
    \node (\name) at (\angle:1.5) {};
    \draw  (c.north west) -- node[left=0pt](h){$y$} (b.south west) -- node[below=0pt](v){$y$} (a.south east) --
          (d.north east) --  cycle;
    \draw[dotted] (b.south west) -- node[pos=0.5,fill=white]{$y$} (d.north east);
    \draw[dotted] (a.south east) -- node[pos=0.5,fill=white]{$y$} (c.north west);
\end{tikzpicture}
}
\\ \hline
\jafar{\textbf{Rule 3a:} If a cell is not filled, then copy any label on the left edge to the right edge.
\ \\
\sundial\begin{tikzpicture}[font=\scriptsize,thick]
  \foreach \name/\angle in {a/135,b/45,c/315,d/225}
    \node (\name) at (\angle:1.5) {};
    \draw (b.south west) -- (a.south east) --
          node[right=0pt](h){$z$}(d.north east) -- (c.north west) -- cycle;
\draw[->] (1.3,0.0) --+ (0.2,0);
\end{tikzpicture}
\begin{tikzpicture}[font=\scriptsize,thick]
  \foreach \name/\angle in {a/135,b/45,c/315,d/225}
    \node (\name) at (\angle:1.5) {};
    \draw (c.north west) -- node[left=0pt](h){$z$}(b.south west) -- (a.south east) -- node[right=0pt](h){$z$}(d.north east) -- cycle;
\end{tikzpicture}
}
&
\jafar{\textbf{Rule 3b:} If a cell is not filled, then copy any label on the bottom edge to the top edge.
\ \\
\sundial\begin{tikzpicture}[font=\scriptsize,thick]
  \foreach \name/\angle in {a/135,b/45,c/315,d/225}
    \node (\name) at (\angle:1.5) {};
    \draw (b.south west) -- (a.south east) --
          (d.north east) -- node[above=0pt](v){$z$}(c.north west) -- cycle;
\draw[->] (1.3,0.0) --+ (0.2,0);
\end{tikzpicture}
\begin{tikzpicture}[font=\scriptsize,thick]
  \foreach \name/\angle in {a/135,b/45,c/315,d/225}
    \node (\name) at (\angle:1.5) {};
    \draw (b.south west) -- node[below=0pt](v){$z$}(a.south east) --
          (d.north east) -- node[above=0pt](v){$z$}(c.north west) -- cycle;
\end{tikzpicture}
} \\ \hline
\end{tabular}
\end{center}
\caption{Bump diagram rules. \label{bdr}Note that in the case of the left egde having label $x$ and the bottom edge having label $y$, with $x \neq y$, then both Rules 3a and 3b are to be followed.}
\end{figure}

The bump sequence for $\pi^{-1}$ can also be obtained by reading the right hand side of the diagram, from bottom to top.  This is a consequence of the observation that to plot the diagram of $\pi^{-1}$, one can simply plot $\pi$ and then reflect the points through the south-west, north-east diagonal.

\begin{example}\label{exafone}
For $\sigma = 51324$ we obtain the diagram in Figure~\ref{fig:BumpDiagram_1}.
Reading across the top of the diagram from left to right, we see that the $\bumpseq$ for $\sigma$ is $(0,1,0,2,0)$ and by summing these values we have $\bump(\sigma)=3$.
Moreover, the $\bumpseq$ for $\sigma^{-1}=24351$ is $(0,0,1,0,2)$ which gives $\bump(\sigma^{-1})= 3$.
\end{example}

\begin{example}\label{exaftwo}
For $\pi = 475382691$, the permutation from Example~\ref{exampleone}, we obtain the bump diagram in Figure~\ref{fig:BumpDiagram_2}.
From this we see that the $\bumpseq$ for $\pi$ is $(0,0,1,2,0,3,1,0,4)$ and that $\bump(\pi)=11$.
Moreover, the $\bumpseq$ for $\pi^{-1}=964137258$ is $(0,1,2,3,0,0,4,1,0)$ which gives $\bump(\pi^{-1})=11$.
\end{example}

\begin{figure}[!h]
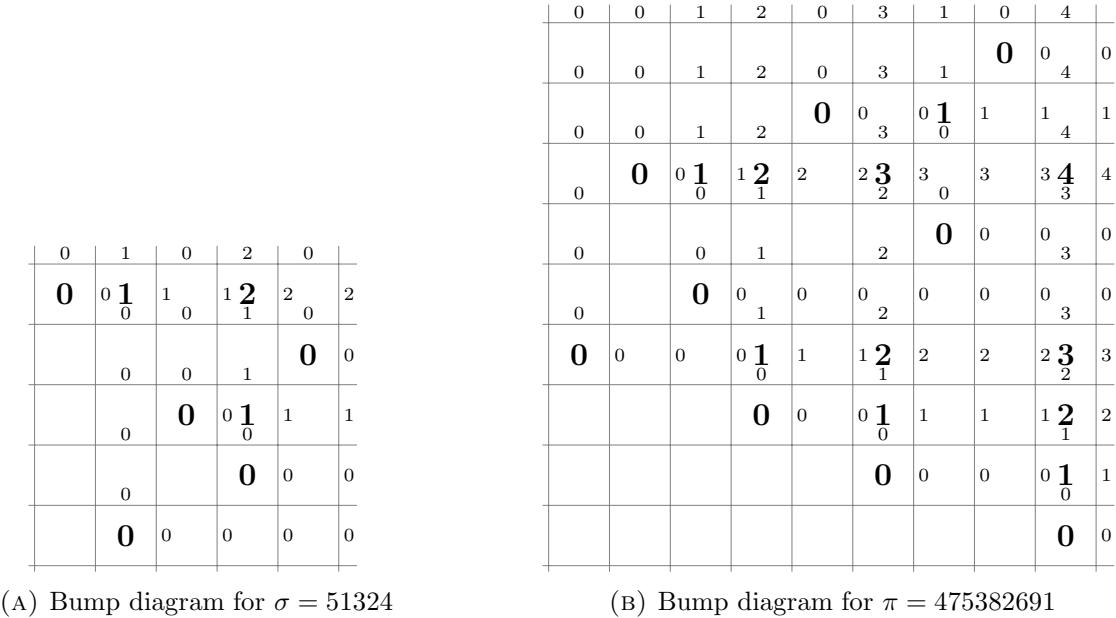

	\centering
	\begin{subfigure}[b]{0.4\textwidth}
		\centering
		\bumpdiagramone
		\caption{Bump diagram for $\sigma=51324$}
		\label{fig:BumpDiagram_1}
	\end{subfigure}
	\hfill
	\begin{subfigure}[b]{0.55\textwidth}
	\centering
	\bumpdiagramtwo
	\caption{Bump diagram for $\pi=475382691$}
	\label{fig:BumpDiagram_2}
	\end{subfigure}
	\caption{The bump diagrams for the permutations in Examples~\ref{exafone} and \ref{exaftwo}}
\end{figure}

Notice also that the bump sequences of $\pi$ and $\pi^{-1}$ contain the same elements (i.e. are equal up to reordering). This is because $\RSK(\pi)$ and $\RSK(\pi^{-1})$ have the same shape.

\section{The weakbump and run permutation statistics}

We turn our attention from counting the number of bumps that occur during the application of RS and focus now on whether or not \textit{any} bumps occur during a given insertion step. 
We find that we can relate this modified statistic to recent research on the RS correspondence.

\begin{definition}
Let $\pi \in \sn$. Define $\weakbump_{i}(\pi)$ to be 1 if $ \bump_{i}(\pi)$ is positive, and 0 otherwise.
Let $$\weakbump(\pi):=\sum_{i=1}^{n} \weakbump_{i}(\pi).$$
\end{definition}

\begin{proposition}\label{weakbumpprop}
Let $\lambda = shape(\pi)$. Then $\weakbump(\pi) = n - \lambda_{1}$.
\end{proposition}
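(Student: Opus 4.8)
The plan is to reduce the claim to a simple cell‑count in the recording tableau $Q$. Recall from the remark immediately preceding Proposition~\ref{abcd} that $\bump_i(\pi)$ equals one less than the row index of the entry $i$ in $Q = Q_n$. Hence $\bump_i(\pi) > 0$ holds exactly when $i$ sits in row $2$ or lower of $Q$, and $\bump_i(\pi) = 0$ holds exactly when $i$ sits in the first row of $Q$. By the definition of $\weakbump_i(\pi)$, this says $\weakbump_i(\pi)$ is the indicator of the event ``$i$ is not in the first row of $Q$''.

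Summing over $i$, I would then write $\weakbump(\pi) = \sum_{i=1}^n \weakbump_i(\pi) = |\{\, i \in \{1,\dots,n\} : i \text{ does not lie in the first row of } Q \,\}|$. Since $Q$ is a standard Young tableau of shape $\lambda = \shape(\pi)$, its first row consists of exactly $\lambda_1$ cells, each filled with a distinct element of $\{1,\dots,n\}$; so precisely $\lambda_1$ of the values $1,\dots,n$ occupy the first row and the remaining $n - \lambda_1$ occupy rows of index $\geq 2$. This yields $\weakbump(\pi) = n - \lambda_1$.

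There is essentially no obstacle here; the only point requiring a little care is to invoke the correct earlier fact, namely that $1 + \bump_i(\pi)$ is the row of $i$ in the $Q$-tableau (equivalently, the row of $\pi_i$ in the $P$-tableau), so that the first‑row entries of $Q$ are exactly the indices $i$ with $\weakbump_i(\pi)=0$. Nothing depends on which of $P$ or $Q$ one reads off, since they share the shape $\lambda$. If a self‑contained derivation is preferred, one can instead observe via Greene's theorem that $\lambda_1 = I_1(\pi)$ is the length of a longest increasing subsequence of $\pi$, whence $\weakbump(\pi) = n - I_1(\pi) = \prem_1(\pi)$; but the $Q$-tableau argument above is the shortest route and is the one I would present.
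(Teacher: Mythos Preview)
Your argument is correct and essentially the same as the paper's: both identify $\{i:\weakbump_i(\pi)=0\}$ with the first-row cells of the tableau, so that this set has size $\lambda_1$ and hence $\weakbump(\pi)=n-\lambda_1$. The only cosmetic difference is that the paper reads this off the $P$-tableau directly from the insertion rules (I1)/(I2) (a new cell lands in row~1 iff no bump occurs), whereas you read it off the $Q$-tableau via the earlier remark that $1+\bump_i(\pi)$ is the row of $i$ in $Q$; since $Q$ records precisely the position of the new cell, these are the same observation.
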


\begin{proof}
We recall steps (I1) and (I2) of the insertion procedure of the RS correspondence. 
A cell is appended to the first row of the tableau $P$ if and only if no bump occurs. 
Therefore the length of the first row, $\lambda_{1}$, is equal to 
$|\{i: \weakbump_i(\pi)=0 \}|$. It follows that $\weakbump(\pi) = n - \lambda_{1}$.
\end{proof}

Recalling (\ref{greenestheoremone}), we can furthermore say that $\weakbump(\pi)$ is therefore equal to 
\textit{the minimum number of terms of $\pi$ that can be {omitted} such that what remains is a strictly increasing sequence}. 

\begin{proposition}
For 321-avoiding permutations, $\weakbump(\pi) = \bump(\pi)$.
\end{proposition}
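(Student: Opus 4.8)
The plan is to show that for a 321-avoiding permutation, every bump occurring during the RS correspondence is a "weak" bump in the sense that no insertion step produces more than one bump; equivalently, that $\shape(\pi)$ has at most two rows whenever $\pi$ is 321-avoiding, and then to combine the shape formulas from Proposition~\ref{bumpinshape} and Proposition~\ref{weakbumpprop}. The crucial structural fact is the classical one: a permutation $\pi$ is 321-avoiding if and only if it is the union of two increasing subsequences, which by Greene's theorem (\ref{greenestheoremtwo}) with $k=1$ is equivalent to $D_1(\pi)=\lambda'_1\leq 2$, i.e. $\shape(\pi)$ has at most two rows. So write $\lambda=(\lambda_1,\lambda_2)$ with $\lambda_1\geq\lambda_2\geq 0$ and $\lambda_1+\lambda_2=n$.

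Once the shape is pinned down to at most two rows, the computation is immediate. By Proposition~\ref{bumpinshape},
\[
\bump(\pi)=\sum_i (i-1)\lambda_i = 0\cdot\lambda_1 + 1\cdot\lambda_2 = \lambda_2.
\]
By Proposition~\ref{weakbumpprop}, $\weakbump(\pi)=n-\lambda_1 = (\lambda_1+\lambda_2)-\lambda_1 = \lambda_2$. Hence $\bump(\pi)=\weakbump(\pi)$.

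The only point that needs care is the equivalence "$\pi$ is 321-avoiding $\iff$ $\shape(\pi)$ has at most two rows," which I would justify via Greene's theorem: $D_1(\pi)$ is the length of the longest decreasing subsequence of $\pi$, which is $\lambda'_1$, the number of rows of $\lambda$; and $\pi$ contains a 321-pattern precisely when it has a decreasing subsequence of length $3$, i.e. precisely when $\lambda'_1\geq 3$. I expect this to be the main (though still routine) obstacle, since it is the one step using nontrivial input rather than bookkeeping; everything after it is a two-line arithmetic substitution. I would also remark, as a sanity check consistent with the narrative of Section~3, that when $\lambda$ has at most two rows the bump sequence $(\bump_i(\pi))$ takes only the values $0$ and $1$, which is exactly the statement that each $\weakbump_i(\pi)=\bump_i(\pi)$ term by term, giving the equality of the sums directly.
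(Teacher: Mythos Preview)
Your proof is correct and rests on the same key observation as the paper: a permutation is 321-avoiding iff $\shape(\pi)$ has at most two rows, justified via Greene's theorem. From there, the paper argues termwise that each $\bump_i(\pi)\in\{0,1\}$ so $\weakbump_i(\pi)=\bump_i(\pi)$, while your main route instead plugs the two-row shape into the formulas of Propositions~\ref{bumpinshape} and~\ref{weakbumpprop} to get $\bump(\pi)=\lambda_2=n-\lambda_1=\weakbump(\pi)$; you also recover the paper's termwise argument in your closing remark, so the two write-ups are essentially interchangeable.
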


\begin{proof}
We know from (\ref{greenestheoremtwo}) that 321-avoiding permutations correspond to tableaux having at most two rows. 
From the definition of $\bump_i(\pi)$ ({Definition}~\ref{definitionten}) we can see that, in the 321-avoiding case, $\bump_i(\pi)$ is 1 if it is positive, and 0 otherwise. 
Therefore $\weakbump(\pi) = \bump_i(\pi)$ for all $i$ and so $\bump(\pi) = \weakbump(\pi)$.
\end{proof}

Returning now to the general case of $\pi \in\sn$, there is another nice way to think about $\weakbump(\pi)$. Imagine that the values of $\{1,\ldots,n\}$ have been written on playing cards and that we hold the cards in our hand in the same order as they appear in $\pi$. 
What is the fewest number, $k$, of cards we must rearrange\footnote{{i.e. remove a card from one location and insert into another location.}} in order to sort them into the identity permutation? 
The answer is $n - \lambda_1$, i.e. $\weakbump(\pi)$. 
To see why this is so, consider any longest ascending subsequence of $\pi$. 
Fix the corresponding $\lambda_1$ cards and rearrange the other cards to get the identity permutation. 
This took at most $n - \lambda_{1}$ moves and so $k$ is at most $n-\lambda_1$. 
On the other hand, suppose that $k$ is the smallest number of cards that must be rearranged to give the identity permutation. 
{If we start with $n$ cards, and rearrange $k$ of them in order to achieve the identity permutation, then the $n-k$ cards that were not 
rearranged remain in the same order relative to each other. Therefore they form an ascending subsequence of $\pi$, and so must number at most $\lambda_1$.}
In other words, $k$ must be at least $n-\lambda_1$.

Donald Knuth considers this problem in his book~\cite[Q 5.1.4;41 \textit{Disorder in a library}]{knuth}, where he calls this sorting method a ``deletion-insertion operation''. 
This also appears in Aldous and Diaconis~\cite[Section 1.3]{adia}.

There is a striking correspondence between the statistic $\weakbump$ and \textit{run}, a permutation statistic developed in a recent paper by Gunawan et al.~\cite{tenner}, that we would now like to explain. 
Let $s_{i} \in \sn$ denote the permutation that transposes the quantities at positions $i$ and $i+1$. 
This is known as an \textit{adjacent transposition} and every permutation $\pi \in \sn$ may be written as a product of adjacent transpositions $s_{i_{1}} \dots s_{i_{k}}$. 

Writing $\ell(\pi)$ to denote the minimum number of adjacent transpositions required to express $\pi$, known as the {\it{Coxeter length of $\pi$}},
a \textit{reduced word} for $\pi$ is defined to be any minimal length decomposition into adjacent transpositions $s_{i_{1}} \dots s_{i_{\ell(\pi)}}$. 
A \textit{run} is now defined to be an increasing or decreasing sequence of consecutive integers {in $i_1,i_2,\ldots,i_{\ell(\pi)}$} and $\run(\pi)$ to be the fewest number of runs needed for a reduced word for $\pi$.

In \cite{tenner} the example $\sigma = 314569278$ is used. This has reduced words $[21873456]$ and $[87213456]$. These both consist of three runs, as can be seen by inserting periods for emphasis: $[21.87.3456]$, $[87.21.3456]$. 
It turns out that there is no reduced word of $\sigma$ with fewer runs and so $\run(\sigma)=3$. 

For our example $\pi=475382691$ (from Example~\ref{exampleone}), we find an example of a reduced word is $[67.5.345.23456.12345678]$ and in fact $\run(\pi)=5$. 
In \cite{tenner} it is shown that: $\run(\pi) = n - \lambda_{1}$.
Each run in a reduced word corresponds to the rearrangement of one card in the discussion above and so we have:

\begin{proposition}
Let $\pi \in  \sn $. Then $\weakbump(\pi) = \run(\pi)$.
\end{proposition}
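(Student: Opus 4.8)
The plan is to notice that the identity follows immediately by chaining together two equalities already available, each of which expresses one side of the statement as the \emph{same} simple function of the common shape $\lambda = \shape(\pi)$. First I would invoke Proposition~\ref{weakbumpprop}, which gives $\weakbump(\pi) = n - \lambda_1$. Then I would invoke the theorem of Gunawan, Pan, Russell, and Tenner quoted immediately above, namely $\run(\pi) = n - \lambda_1$, where $\lambda_1$ is again the length of the first row of the common shape of the tableaux $\RSK(\pi)$ (equivalently, by Schensted's theorem, the length of a longest increasing subsequence of $\pi$). Equating the two right-hand sides yields $\weakbump(\pi) = \run(\pi)$, and the proof is complete.

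If one instead wanted an argument that does not black-box the formula $\run(\pi) = n - \lambda_1$ from \cite{tenner}, I would route the proof through the deletion-insertion (card-rearrangement) interpretation developed in the paragraphs preceding the statement. There I established that $\weakbump(\pi) = n - \lambda_1$ is exactly the minimum number of deletion-insertion moves required to sort the hand $\pi$ into the identity. One would then show that a reduced word for $\pi$ partitioned into $r$ runs encodes a way of sorting $\pi$ using $r$ deletion-insertion moves, and conversely that an optimal sorting sequence can be converted into a reduced word with that many runs. The forward direction --- turning one maximal run of consecutive adjacent transpositions into the relocation of a single card --- is geometrically transparent; the reverse direction, manufacturing a reduced word with few runs from an optimal sorting sequence, is the delicate half and is essentially the content of \cite{tenner}.

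The only real subtlety in the short proof is making sure the two occurrences of ``$n - \lambda_1$'' refer to the same $\lambda_1$; they do, since both are phrased in terms of $\shape(\pi)$, whose first part is $\lambda_1$. In the self-contained version, the main obstacle is instead the reverse half of the correspondence between runs in reduced words and deletion-insertion moves, so the economical proof via Proposition~\ref{weakbumpprop} and the cited identity is the one I would present.
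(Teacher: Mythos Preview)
Your proof is correct and matches the paper's own argument exactly: both $\weakbump(\pi)$ and $\run(\pi)$ are identified with $n-\lambda_1$ (via Proposition~\ref{weakbumpprop} and the cited result of \cite{tenner}, respectively), and the equality follows. The additional alternative route you sketch is extra; the paper likewise black-boxes the identity $\run(\pi)=n-\lambda_1$ from \cite{tenner}.
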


\begin{proof}
As $\weakbump(\pi)$ and $\run(\pi)$ are both equal to $n - \lambda_1$, the result is immediate.
\end{proof}


\section{Bump polynomials}
In this section we will look at two polynomials associated with the bump statistic. 
\subsection{The permutation-bump polynomial}

The first is the generating function of the bump statistic over permutations. Let $b_{n,i}$ be the number of $\pi \in \sn$ with $\bump(\pi)=i$. 
 \begin{definition}[The permutation-bump polynomial]
	Define
    \begin{align}
        B_n(q) =& \sum_{\pi \in \sn} q^{\bump(\pi)} = \sum_i b_{n,i} q^i .
    \end{align}     
 \end{definition}

The first few polynomials are listed in Table~\ref{firstfew}.

\begin{table}[!h]
$$
\boxed{
\footnotesize
\begin{array}{lcl@{\qquad}lcl}
B_1(q) &=& 1  			& T_1(q) &=& 1 \\
B_2(q) &=& 1+q 			& T_2(q) &=& 1+q \\
B_3(q) &=& 1+4q+q^3 		& T_3(q) &=& 1+q+q^3 \\
B_4(q) &=& 1+9q+4q^2+9q^3+q^6 		& T_4(q) &=& 1+q+q^2+q^3+q^6 \\
B_5(q) &=& 1+16q+25q^2+36q^3+25q^4+16q^6+q^{10} & T_5(q) &=& 1+q+q^2+q^3+q^4+q^6+q^{10} 
\end{array}
}$$
\caption{\label{firstfew}}
\end{table}
Proposition~\ref{bumpinshape} shows that the bump statistic depends only on the shape of the tableaux via RS.
\begin{corollary}
    $B_n(q) = \displaystyle\sum_{\lambda \vdash n} (f^{\lambda})^2 q^{\lambda_2+2\lambda_3+3\lambda_4+\ldots}$ where $f^{\lambda}$ is the number of distinct standard Young tableaux of shape $\lambda$.
\end{corollary}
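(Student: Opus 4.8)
The plan is to derive this directly from Proposition~\ref{bumpinshape} together with the defining property of the Robinson--Schensted correspondence. First I would group the permutations in $\sn$ according to the common shape $\shape(\pi)$ of the tableaux $\RSK(\pi) = (P,Q)$, writing
\begin{align*}
B_n(q) = \sum_{\pi \in \sn} q^{\bump(\pi)} = \sum_{\lambda \vdash n} \ \sum_{\substack{\pi \in \sn \\ \shape(\pi) = \lambda}} q^{\bump(\pi)}.
\end{align*}
Since $\RSK$ is a bijection from $\sn$ onto pairs $(P,Q)$ of standard Young tableaux of the same shape, the inner sum ranges over exactly those $\pi$ whose image is one of the $f^\lambda \cdot f^\lambda = (f^\lambda)^2$ pairs of shape $\lambda$; here $f^\lambda$ is as in Theorem~\ref{hooklengthformula}.

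Next I would invoke Proposition~\ref{bumpinshape}: for every $\pi$ with $\shape(\pi) = \lambda = (\lambda_1,\lambda_2,\ldots)$ we have $\bump(\pi) = \sum_i (i-1)\lambda_i$, which is the constant $\lambda_2 + 2\lambda_3 + 3\lambda_4 + \cdots$ depending only on $\lambda$. Substituting this into the inner sum makes the summand independent of $\pi$, so the inner sum collapses to $(f^\lambda)^2\, q^{\lambda_2 + 2\lambda_3 + 3\lambda_4 + \cdots}$, and summing over $\lambda \vdash n$ gives the claimed identity.

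There is no real obstacle here: the statement is a bookkeeping consequence of the bijection and the shape-formula for $\bump$. The only points worth stating carefully are that $\RSK$ being shape-preserving is what produces the square $(f^\lambda)^2$ (one free choice for $P$, one for $Q$), and that the exponent $\lambda_2 + 2\lambda_3 + \cdots$ is merely a rewriting of $\sum_i (i-1)\lambda_i$ with the $i=1$ term vanishing. I would also remark that this corollary immediately explains the symmetry of the data in Table~\ref{firstfew} and, combined with Table~\ref{firstfew}, justifies the companion polynomial $T_n(q) = \sum_{\lambda \vdash n} q^{\lambda_2 + 2\lambda_3 + \cdots}$ as the shape-generating analogue obtained by replacing $(f^\lambda)^2$ with $1$.
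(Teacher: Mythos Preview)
Your proof is correct and matches the paper's approach exactly: the paper states this as a corollary with no explicit proof, relying on precisely the two ingredients you spell out (Proposition~\ref{bumpinshape} for the exponent and the Robinson--Schensted bijection for the $(f^\lambda)^2$ count). One small caveat: your closing remark about ``the symmetry of the data in Table~\ref{firstfew}'' is unclear---the coefficients of $B_n(q)$ are not palindromic (indeed the paper later emphasises this), so if you meant that the coefficients are perfect squares you should say so explicitly, and otherwise drop the comment.
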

The coefficients for $B_n(q)$ do not enjoy those properties of unimodality/log-concavity that are satisfied by other permutation statistics. 
For example, in
\begin{align*}
B_8(q)=&
1+ 49q+ 400q^2+ 1225q^3+ 4292q^4 + 4900q^5+ 4361q^6+ 9864q^7 + 3136q^8
\\ &
+ 4900q^9+ 1225q^{10}
+ 4096q^{11} +196q^{12}+ 784q^{13}+ 441q^{15}+ 400q^{16}+ 49q^{21}+ q^{28}.
\end{align*}
we can see the polynomial is not symmetric, not unimodal, and there are many `internal' coefficients that are zero.
%
{The reason for the existence of these internal zero coefficients is that, for there to be a non-zero coefficient of an exponent of $q$ in $B_n(q)$, there is a satisfiability problem to be solved for that exponent.}
For example, the coefficient of $q^{25}$ must be zero since it is not possible to find a partition $\lambda=(\lambda_1,\lambda_2,\ldots)\vdash  8$ such that the equation $\lambda_2+2\lambda_3+3\lambda_4+\ldots = 25$ has a solution. \newline

While it is not possible to say much about $B_n(q)$ given its dependence upon $\SYT(\lambda)$, we can say something about the closed form for a special case. 
Consider restricting the sum over all permutations to a sum over all permutations avoiding the pattern 321.

\begin{theorem}
{Let $B_n^{321}(q)$ be the generating function of the bump statistic over the set of length-$n$ 321-avoiding permutations. Then}
$$B_n^{321}(q) =  1+ \displaystyle\sum_{1\leq k \leq n/2} \dfrac{\binom{n}{k,k,n-2k}^2}{\binom{n-k+1}{k}^2} q^k.$$
\end{theorem}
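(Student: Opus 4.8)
The plan is to use the fact, established in Proposition~\ref{bumpinshape} and its corollary, that for a 321-avoiding permutation $\pi$ the shape $\lambda = \shape(\pi)$ has at most two rows, say $\lambda = (n-k, k)$ with $0 \le k \le n/2$, and that $\bump(\pi) = \lambda_2 = k$. Hence
\begin{align*}
B_n^{321}(q) = \sum_{0 \le k \le n/2} \Big(\#\{\pi \in \s_n : \pi \text{ is }321\text{-avoiding},\ \shape(\pi)=(n-k,k)\}\Big)\, q^k.
\end{align*}
The $k=0$ term contributes the standalone $1$ (only the identity has shape $(n)$). So the entire theorem reduces to showing that for $1 \le k \le n/2$ the number of 321-avoiding permutations of $\s_n$ whose RS-shape is $(n-k,k)$ equals $\binom{n}{k,k,n-2k}^2 / \binom{n-k+1}{k}^2$.

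First I would invoke the RS correspondence together with Greene's theorem (specifically \eqref{greenestheoremtwo}, which tells us 321-avoidance is exactly the condition $\lambda_3 = 0$, i.e.\ at most two rows): since RS is a bijection onto pairs $(P,Q)$ of standard Young tableaux of the same shape, the count of 321-avoiding $\pi$ with $\shape(\pi) = (n-k,k)$ is exactly $(f^{(n-k,k)})^2$. So it suffices to prove the single-tableau identity
\begin{align*}
f^{(n-k,k)} = \frac{1}{\binom{n-k+1}{k}}\binom{n}{k,k,n-2k} = \frac{n!}{k!\,k!\,(n-2k)!}\cdot\frac{k!\,(n-2k+1)!}{(n-k+1)!}.
\end{align*}
This I would get from the hook length formula (Theorem~\ref{hooklengthformula}): for the two-row shape $(n-k,k)$ one writes down the hook lengths explicitly — the second row cells have hooks $k, k-1, \dots, 1$ shifted appropriately, the first row cells split into those sitting above a second-row cell (hooks $n-k+1, n-k, \dots$) and those not — and the product telescopes. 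Concretely $f^{(n-k,k)} = \dfrac{n!}{(n-2k+1)!^{-1}\cdots}$; after simplification it is the well-known formula $f^{(a,b)} = \dfrac{(a+b)!\,(a-b+1)}{(a+1)!\,b!}$ with $a = n-k$, $b = k$, which one then massages into the claimed multinomial-over-binomial form. An alternative, perhaps cleaner, route is to recognize $f^{(n-k,k)}$ as a ballot number / difference of binomial coefficients, $\binom{n}{k} - \binom{n}{k-1}$, and verify algebraically that this equals the stated ratio; squaring then gives the coefficient of $q^k$.

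The only real work is the bookkeeping in the hook-length computation (or equivalently the algebraic identity $\binom{n}{k}-\binom{n}{k-1} = \binom{n}{k,k,n-2k}/\binom{n-k+1}{k}$), which is routine but must be done carefully; there is no conceptual obstacle once the reduction to $\big(f^{(n-k,k)}\big)^2$ is in place. I would present the reduction via RS and Greene in one short paragraph, then devote a displayed computation to evaluating $f^{(n-k,k)}$ by hooks and rearranging into the desired shape, and conclude by squaring and summing over $k$ (plus the $k=0$ term) to recover $B_n^{321}(q)$.
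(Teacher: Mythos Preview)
Your proposal is correct and follows essentially the same route as the paper: reduce via the RS bijection and Greene's theorem to counting pairs of SYT of shape $(n-k,k)$, then evaluate $f^{(n-k,k)}$ with the hook length formula and rewrite it as $\binom{n}{k,k,n-2k}/\binom{n-k+1}{k}$. The paper carries out the hook-length computation explicitly rather than citing the ballot-number form $\binom{n}{k}-\binom{n}{k-1}$, but otherwise the arguments coincide.
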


\begin{proof}
Consider a shape $\lambda$ corresponding to a 321-avoiding permutation of length $n$. It can have at most two rows. If it has one row then the contribution to $B_n^{321}(q)$ will be 1 since the only such $\pi$ is the identity permutation, which has bump value zero.

If $\lambda$ has two rows, assume the second row has $k\geq 1$ cells and the first row $n-k$ cells. 
Since the second row can be no longer than the first we must have $k \leq n-k$, i.e. $k\leq n/2$.
The number of standard Young tableaux with shape $\lambda=(n-k,k)$ is given by the hook-length formula (Theorem~\ref{hooklengthformula}):
\begin{align*}
f^{(n-k,k)} &= \dfrac{n!}{k! (n-2k)! (n-2k+2) (n-2k+1) \cdots (n-k+1) }\\
&= \binom{n}{k}\binom{n-k}{k} / \binom{n-k+1}{k} = \binom{n}{k,k,n-2k} / \binom{n-k+1}{k}.
\end{align*}
As $B_n^{321}(q) = 1+\sum_{1\leq k \leq n/2} (f^{(n-k,k)})^2 q^k$, we have the stated expression.
\end{proof}
A sequence $(x_1,\ldots,x_m)$ is called {\it{log-concave}} if $x_i^2\geq x_{i-1}x_{i+1}$ for all $1<i<m$.
\begin{theorem}
The sequence of coefficients of the polynomial $B_n^{321}(q)$ is log-concave.
\end{theorem}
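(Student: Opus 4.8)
The plan is to reduce log-concavity of the full coefficient sequence of $B_n^{321}(q)$ to log-concavity of the sequence $(c_k)_{0\le k\le n/2}$ where $c_k := f^{(n-k,k)}$ for $k\ge 1$ and $c_0 := 1$, since $B_n^{321}(q)=\sum_k c_k^2 q^k$ and the square of a positive log-concave sequence is again log-concave (if $c_k^2 \ge c_{k-1}c_{k+1}$ then $(c_k^2)^2 = c_k^2\cdot c_k^2 \ge c_{k-1}c_{k+1}\cdot c_{k-1}c_{k+1} = (c_{k-1}^2)(c_{k+1}^2)$). So it suffices to show $c_k^2 \ge c_{k-1}c_{k+1}$ for all $1<k<n/2$, together with the two boundary inequalities involving $c_0=1$: namely $c_1^2 \ge c_0 c_2 = c_2$ and (at the top end, if the last exponent is $m=\lfloor n/2\rfloor$) $c_{m-1}^2 \ge c_{m-2}c_m$, which is just the generic case.

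The main computation is therefore to analyze the ratio $c_{k+1}/c_k$ for $k\ge 1$. Using the Catalan-ballot formula $f^{(n-k,k)} = \binom{n}{k}-\binom{n}{k-1}$ (equivalently the hook-length expression derived in the previous theorem), one computes
\begin{align*}
\frac{c_{k+1}}{c_k} = \frac{\binom{n}{k+1}-\binom{n}{k}}{\binom{n}{k}-\binom{n}{k-1}} = \frac{(n-2k-1)(n-k+1)}{(k+1)(n-2k+1)}\cdot\frac{k}{k}\,,
\end{align*}
and I would simplify this to an explicit rational function of $k$ and $n$, then verify that it is (weakly) decreasing in $k$ on the range $1\le k\le n/2-1$; weak decrease of consecutive ratios is exactly the log-concavity inequality $c_k^2\ge c_{k-1}c_{k+1}$. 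Concretely, writing $r_k := c_{k+1}/c_k$, I want $r_k \le r_{k-1}$, i.e. $r_{k-1}-r_k \ge 0$; clearing the (positive) denominators turns this into a polynomial inequality in $k$ and $n$ that holds for all integers $1\le k< n/2$, checkable by expanding and collecting terms (the dominant behavior being that as $k$ grows, the factor $(n-2k-1)$ in the numerator shrinks, forcing the ratio down). Finally I handle the boundary: $c_1 = f^{(n-1,1)} = n-1$, so $c_1^2=(n-1)^2$ and $c_0c_2 = f^{(n-2,2)} = \binom{n}{2}-n = \tfrac{n(n-3)}{2}$, and $(n-1)^2 \ge \tfrac{n(n-3)}{2}$ reduces to $2(n-1)^2 - n(n-3) = n^2+n+2 \ge 0$, which is clear.

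The step I expect to be the main obstacle is verifying monotonicity of $r_k$ in $k$ cleanly — the rational function $r_k$ has factors that change sign or vanish near the top of the range ($n-2k-1$ can be $0$ or negative-looking when $n$ is even and $k$ is near $n/2$), so I would be careful to (i) note that the polynomial $B_n^{321}$ genuinely has its top coefficient at $k=\lfloor n/2\rfloor$ and all listed $c_k$ are strictly positive there, and (ii) phrase the argument so that the $k=\lfloor n/2\rfloor -1 \to \lfloor n/2\rfloor$ transition, where $c_{k+1}$ may be small (e.g. $c_{n/2}=\mathrm{Cat}$-type small when $n$ even), is covered by the same inequality rather than needing a separate special case. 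Once the ratio $r_k$ is written as a manifestly decreasing function on the correct integer range, the rest is routine algebra. An alternative, if the direct ratio computation is messy, is to invoke the known fact that the rows of the Catalan triangle (the numbers $f^{(n-k,k)}=\binom{n}{k}-\binom{n}{k-1}$) form a log-concave sequence and cite that, then only the $c_0=1$ boundary needs the separate check above.
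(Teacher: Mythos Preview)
Your approach is essentially the same as the paper's: both reduce log-concavity of $(c_k^2)$ to log-concavity of $(c_k)$ with $c_k=f^{(n-k,k)}$ (the paper phrases this as ``take positive square roots of both sides''), and then verify the resulting inequality algebraically. The paper simply carries out the algebra you leave as a to-do, arriving at the clean factored inequality $(k+1)(n-k+2)(n-2k+1)^2 \ge k(n-k+1)(n-2k+3)(n-2k-1)$, which it checks factor-by-factor (note in particular $(n-2k+1)^2 \ge (n-2k+1)^2-4=(n-2k+3)(n-2k-1)$ handles your worry about the top of the range without a separate case); also, in your boundary check $2(n-1)^2 - n(n-3)=n^2-n+2$, not $n^2+n+2$, though of course this is still positive.
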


\begin{proof}
Fix $n$ and let 
$a_k$ be the coefficient of $q^k$ in $B_n^{321}(q)$, i.e. 
$$a_k=[q^k] B_n^{321}(q) = {\binom{n}{k,k,n-2k}^2}/{\binom{n-k+1}{k}^2}$$ for all $1\leq k \leq n/2$.
Then $a_k^2 \geq a_{k-1}a_{k+1}$ iff
$$\dfrac{\binom{n}{k,k,n-2k}^4}{\binom{n-k+1}{k}^4} \geq \dfrac{\binom{n}{k-1,k-1,n-2k+2}^2}{\binom{n-k+2}{k-1}^2}  \dfrac{\binom{n}{k+1,k+1,n-2k-2}^2}{\binom{n-k}{k+1}^2}.$$
Since all quantities are strictly positive we see that the above inequality holds true iff it is also true once one takes the positive square roots of both sides:
$$\dfrac{\binom{n}{k,k,n-2k}^2}{\binom{n-k+1}{k}^2} \geq \dfrac{\binom{n}{k-1,k-1,n-2k+2}}{\binom{n-k+2}{k-1}}  \dfrac{\binom{n}{k+1,k+1,n-2k-2}}{\binom{n-k}{k+1}}.$$
On cancelling the many common terms on both sides, this inequality is equivalent to:
$$(k + 1)(n - k + 2)(n - 2k + 1)^2 \geq k(n - k + 1)(n - 2k + 3)(n - 2k - 1).$$
Compare terms on both sides from left to right:
$k+1\geq k$;
$n-k+2\geq n - k + 1$; and 
$(n - 2k + 1)^2\geq (n - 2k + 1)^2-4 =  (n - 2k + 1 +2) (n - 2k + 1 -2) = (n - 2k + 3)(n - 2k - 1)$.
The sequence $(a_k)$ is therefore log-concave.
\end{proof}

Let us now consider a bivariate permutation bump polynomial. 
Let $r(\pi)$ be $\pi$ written in reverse one-line order (for example, if $\pi=25134$ then $r(\pi)=43152$). 
Clearly this reverse operator $r$ is {an involution}.
Define $$B_n(q,t) := \sum_{\pi \in \sn} q^{\bump(\pi)} t^{\bump(r(\pi))}.$$

\begin{proposition} 
$B_n(q,t)=B_n(t,q)$.
\end{proposition}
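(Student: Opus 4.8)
The plan is to exhibit a bijection on $\sn$ that swaps the roles of $\bump$ and $\bump \circ r$, so that the coefficient of $q^a t^b$ in $B_n(q,t)$ equals the coefficient of $q^b t^a$. The natural candidate is the reverse operator $r$ itself: it is an involution, so summing over $\pi \in \sn$ is the same as summing over $r(\pi) \in \sn$, and under the substitution $\pi \mapsto r(\pi)$ the monomial $q^{\bump(\pi)} t^{\bump(r(\pi))}$ becomes $q^{\bump(r(\pi))} t^{\bump(r(r(\pi)))} = q^{\bump(r(\pi))} t^{\bump(\pi)}$, since $r$ is an involution. This immediately gives $B_n(q,t) = B_n(t,q)$.

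Spelling this out: we compute
\begin{align*}
B_n(q,t) &= \sum_{\pi \in \sn} q^{\bump(\pi)} t^{\bump(r(\pi))}
= \sum_{\sigma \in \sn} q^{\bump(r(\sigma))} t^{\bump(r(r(\sigma)))}
= \sum_{\sigma \in \sn} q^{\bump(r(\sigma))} t^{\bump(\sigma)}
= B_n(t,q),
\end{align*}
where the second equality is the change of summation variable $\sigma = r(\pi)$ (a bijection of $\sn$ because $r$ is an involution, hence in particular a bijection), and the third uses $r \circ r = \mathrm{id}$.

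There is essentially no obstacle here; the statement is a formal consequence of $r$ being an involution, and no property of the $\bump$ statistic beyond being a well-defined function on $\sn$ is needed. The only thing worth a sentence is to note that $r$ is genuinely a bijection $\sn \to \sn$ — which the excerpt has already observed ("this reverse operator $r$ is an involution") — so that re-indexing the sum is legitimate.
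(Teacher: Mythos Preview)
Your proof is correct and is essentially identical to the paper's: both use that $r$ is an involution on $\sn$ to pair each term $q^{\bump(\pi)} t^{\bump(r(\pi))}$ with the term $q^{\bump(r(\pi))} t^{\bump(\pi)}$ coming from $r(\pi)$. Your version just makes the change of summation variable explicit.
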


\begin{proof}
Suppose $\pi\in \sn$ with $\bump(\pi)=a$ and $\bump(r(\pi))=b$. 
Then the permutation $\pi'=r(\pi)$ is such that $\bump(\pi')=b$ and $\bump(r(\pi'))=\bump(\pi)=a$. 
This means to every term $q^a t^b$ in $B_n(q,t)$ there is a corresponding term $q^b t^a$, and so
$B_n(q,t)=B_n(t,q)$.
\end{proof}

We also have the diagonal of this bivariate polynomial:
\begin{proposition}
$B_n(q,q)= 
 \ds\dfrac{1}{q^{n}} \sum_{\lambda \vdash n} (f^{\lambda})^2  \prod_{c \in \lambda} q^{h(c)}.$
\end{proposition}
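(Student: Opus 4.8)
The plan is to reduce everything to the RS-shape and then to a bookkeeping identity for hook lengths. First I would use Proposition~\ref{bumpinshape}, which tells us that $\bump(\pi)$ depends only on $\lambda=\shape(\pi)$, via $\bump(\pi)=\sum_i (i-1)\lambda_i$; write $n(\lambda):=\sum_i(i-1)\lambda_i$ for brevity. Next I would establish that reversing a permutation transposes its RS-shape, i.e. $\shape(r(\pi))=\lambda'$. This follows directly from Greene's theorem as stated in the excerpt: reading the list of positions backwards is a bijection between increasing subsequences of $r(\pi)$ and decreasing subsequences of $\pi$ (and likewise between unions of $k$ of each), so $I_k(r(\pi))=D_k(\pi)$ for every $k$; combining \eqref{greenestheoremone} applied to $r(\pi)$ with \eqref{greenestheoremtwo} applied to $\pi$ gives $\mu_1+\cdots+\mu_k=\lambda'_1+\cdots+\lambda'_k$ for all $k$, where $\mu=\shape(r(\pi))$, hence $\mu=\lambda'$.

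Consequently $\bump(\pi)+\bump(r(\pi))=n(\lambda)+n(\lambda')$, a quantity depending only on $\lambda$. Summing $q^{\bump(\pi)+\bump(r(\pi))}$ over all $\pi\in\sn$ and grouping by shape, and using that exactly $(f^{\lambda})^2$ permutations have RS-shape $\lambda$ (RSK is a bijection onto pairs of standard Young tableaux of the same shape), I obtain $B_n(q,q)=\sum_{\lambda\vdash n}(f^{\lambda})^2 q^{n(\lambda)+n(\lambda')}$. It then remains only to identify the exponent as $n(\lambda)+n(\lambda')=\bigl(\sum_{c\in\lambda}h(c)\bigr)-n$. For this I would use the cellwise hook-length expression $h(i,j)=(\lambda_i-j)+(\lambda'_j-i)+1$; summing over all $n$ cells, the term $+1$ contributes $n$, the part $\sum_{(i,j)\in\lambda}(\lambda_i-j)$ contributes $\sum_i\binom{\lambda_i}{2}$, and by the row/column symmetry the part $\sum_{(i,j)\in\lambda}(\lambda'_j-i)$ contributes $\sum_j\binom{\lambda'_j}{2}$. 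Finally $\sum_i\binom{\lambda'_i}{2}=n(\lambda)$, since $\binom{\lambda'_i}{2}$ counts the pairs of cells in column $i$ and hence each cell in row $b$ of $\lambda$ is counted $b-1$ times; symmetrically $\sum_j\binom{\lambda_j}{2}=n(\lambda')$. Thus $\sum_{c\in\lambda}h(c)=n+n(\lambda)+n(\lambda')$, as wanted.

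Putting the pieces together, $q^{\bump(\pi)+\bump(r(\pi))}=q^{\,\sum_{c\in\lambda}h(c)-n}=\frac{1}{q^{n}}\prod_{c\in\lambda}q^{h(c)}$, and pulling the factor $q^{-n}$ outside the sum over shapes yields the stated formula. I do not expect a real obstacle: the only step needing a moment of care is the shape-transposition fact $\shape(r(\pi))=\lambda'$, and even that is immediate from Greene's theorem once one observes the position-reversal bijection between increasing and decreasing subsequences; the remainder is the routine hook-length identity for $n(\lambda)+n(\lambda')$.
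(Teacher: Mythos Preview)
Your proposal is correct and follows essentially the same route as the paper: reduce $B_n(q,q)$ to a sum over shapes via $\shape(r(\pi))=\lambda'$, then invoke the identity $\sum_{c\in\lambda}h(c)=n+n(\lambda)+n(\lambda')$. The only notable difference is in justifying the conjugate-shape fact: the paper quotes the tableau-level result $\RSK(r(\pi))=(P\myt,\mathrm{evac}(Q)\myt)$ from Stanley, whereas you deduce the shape statement directly from Greene's theorem via the obvious bijection between increasing subsequences of $r(\pi)$ and decreasing subsequences of $\pi$; your argument is more self-contained given what is already stated in the paper, while the paper's citation gives slightly more (the actual tableaux, not just the shape). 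The hook-length computations are the same in substance, yours phrased via $\sum_i\binom{\lambda'_i}{2}=n(\lambda)$ and the paper's via the arm/leg decomposition and a double-sum interchange.
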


\begin{proof}
Let $\pi\in\sn$ and suppose $(P,Q)=\RSK(\pi)$.
Then the reverse of $\pi$ corresponds to $(P\myt,\mathrm{evac}(Q)\myt)$ 
where $\myt$ denotes the reflection of a tableaux in the line $x+y=0$ and is also called the {\it{transpose}}; 
see {Stanley's book}~\cite[Corollary A1.2.11]{stanley1} for a discussion of $\mathrm{evac}(Q)$.
This means the shape of the reverse of $\pi$ will be the conjugate of $\shape(\pi)$.
Thus
\begin{align}
B_n(q,q) 
= & \sum_{\pi \in \sn} q^{\bump(\shape(\pi))+\bump((\shape(\pi))')} \nonumber \\
= & \sum_{\lambda \vdash n} (f^{\lambda})^2  q^{\bump(\lambda)+\bump(\lambda')}.\label{harvey}
\end{align}

For the proof of this proposition we will use the following result (that appears in Bergeron's book~\cite[p. 33]{bergeron}):
\begin{align}
\displaystyle\sum_{c \in \lambda} h(c) =  \bump(\lambda) + \bump(\lambda') + |\lambda|.\label{mudpile}
\end{align}
This sum of hook-lengths result follows from the following argument.
\newcommand{\leg}{\text{leg}}
\newcommand{\arm}{\text{arm}}
Let $\leg(c)$ denote the number of cells strictly below $c$, in the same column, and 
let $\arm(c)$ denote the number of cells strictly to the right of $c$, in the same row.
Then:
\begin{align*}
\sum_{c \in \lambda} h(c) &= \sum_{c \in \lambda}(1+\leg(c)+\arm(c))\\
    & = n + \displaystyle\sum_{c \in \lambda}\leg(c) + \sum_{c \in \lambda}\arm(c)\\
    & = |\lambda| + \sum_{i=1}^{\lambda_1'} \sum_{k=i+1}^{\lambda_1'}\lambda_k + \sum_{j=1}^{\lambda_1}\sum_{l=j+1}^{\lambda_1}\lambda_l'\\
    & = |\lambda| + \sum_{k=2}^{\lambda_1'}\sum_{i=1}^{k-1}\lambda_k + \sum_{l=2}^{\lambda_1}\sum_{j=1}^{l-1}\lambda_l'\\
    &= |\lambda| + \sum_{k=1}^{\lambda_1'}(k-1)\lambda_k + \sum_{l=1}^{\lambda_1}(l-1)\lambda_l'\\
    &= |\lambda| + \bump(\lambda) + \bump(\lambda').
\end{align*}
The last step follows from Proposition~\ref{bumpinshape}.
So we can use (\ref{mudpile}) to rewrite (\ref{harvey}) as:
\begin{align*}
B_n(q,q)= & \sum_{\lambda \vdash n} (f^{\lambda})^2  q^{-n+\sum_{c \in \lambda} h(c)}\\
= & \dfrac{1}{q^{n}} \sum_{\lambda \vdash n} (f^{\lambda})^2  \prod_{c \in \lambda} q^{h(c)}. \qedhere
\end{align*}
\end{proof}

Proposition~\ref{weakbumpprop} allows us to see that the generating function for the weakbump statistic over permutations is the reversal of the generating function for the longest increasing statistic over permutations. There is no known formula for this generating function, but it appears as sequence A047874 in the On-Line Encyclopedia of Integer Sequences~\cite{oeis}.

\subsection{The partition-bump polynomial}
There is more we can say about the generating function of the bump statistic over partitions rather than permutations.
This generating function is 
\begin{align}
T_n(q) =& \sum_{\lambda \vdash n} q^{\bump(\lambda)} = \sum_i t_{n,i} q^i,
\end{align}
where $t_{n,i}$ is the number of partitions $\lambda \vdash n$ with $\bump(\lambda)=i$.
The first few polynomials are listed in Table~\ref{firstfew}.

\begin{theorem}
$T_n(q) = [t^n] ~ \displaystyle \prod_{i=1}^{n} \dfrac{1}{1-q^{\binom{i}{2}}t^i }.$
\end{theorem}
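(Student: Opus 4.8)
The plan is to reduce the claim to an elementary generating-function identity for partitions indexed by the consecutive differences of their parts (equivalently, by the multiplicities of the parts of the conjugate partition), after first rewriting $\bump(\lambda)$ in a form that factors over those parameters.

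First, recall from Proposition~\ref{bumpinshape} that $\bump(\lambda)=\sum_{i\geq 1}(i-1)\lambda_i$ for a partition $\lambda=(\lambda_1\geq\lambda_2\geq\cdots)$. I would reindex by the consecutive differences, setting $d_j=\lambda_j-\lambda_{j+1}\geq 0$ (with $\lambda_{k+1}=0$ when $\lambda$ has $k$ parts). The map $\lambda\mapsto(d_1,d_2,\ldots)$ is a bijection from partitions of $n$ onto sequences of nonnegative integers, almost all zero, satisfying $\sum_{j\geq 1}j\,d_j=n$: indeed $n=\sum_i\lambda_i=\sum_i\sum_{j\geq i}d_j=\sum_j j\,d_j$. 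Interchanging the order of summation in the same way yields
\[
\bump(\lambda)=\sum_{i\geq 1}(i-1)\sum_{j\geq i}d_j=\sum_{j\geq 1}d_j\sum_{i=1}^{j}(i-1)=\sum_{j\geq 1}\binom{j}{2}d_j .
\]
(Here $d_j$ is exactly the number of parts of the conjugate partition $\lambda'$ equal to $j$, so this is the identity $\sum_i(i-1)\lambda_i=\sum_j\binom{\lambda'_j}{2}$, obtained by summing each cell's row index column by column; one may phrase the step either way.)

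Next I would substitute this into the definition of $T_n(q)$. Using the bijection above,
\[
T_n(q)=\sum_{\lambda\vdash n}q^{\bump(\lambda)}=\sum_{\substack{(d_1,d_2,\ldots)\\ \sum_j j\,d_j=n}}\ \prod_{j\geq 1}\bigl(q^{\binom{j}{2}}\bigr)^{d_j}.
\]
On the other hand, expanding each factor of the claimed product as a geometric series in $t$,
\[
\prod_{i=1}^{n}\frac{1}{1-q^{\binom{i}{2}}t^i}=\prod_{i=1}^{n}\ \sum_{d_i\geq 0}\bigl(q^{\binom{i}{2}}\bigr)^{d_i}t^{i\,d_i},
\]
so that $[t^n]$ of this product is the sum of $\prod_{i=1}^{n}(q^{\binom{i}{2}})^{d_i}$ over all tuples $(d_1,\ldots,d_n)$ of nonnegative integers with $\sum_{i=1}^{n}i\,d_i=n$. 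Since any solution of $\sum_{j}j\,d_j=n$ in nonnegative integers automatically has $d_j=0$ for $j>n$, truncating the product at $i=n$ discards nothing: the two sums are indexed by the same set with the same summand, hence equal. This proves the identity.

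There is no serious obstacle here; the points that need a little care are verifying the bijection $\lambda\leftrightarrow(d_j)$ together with the two interchanges of summation, and checking that replacing the infinite product $\prod_{i\geq 1}$ by the truncation $\prod_{i=1}^{n}$ is harmless when extracting the coefficient of $t^n$. Everything else is routine formal manipulation of power series in the variables $q$ and $t$.
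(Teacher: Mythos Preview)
Your proof is correct and is genuinely different from the paper's. You reparametrize partitions by the consecutive differences $d_j=\lambda_j-\lambda_{j+1}$ (equivalently, the number of columns of height $j$ in the Young diagram), verify that $|\lambda|=\sum_j j\,d_j$ and $\bump(\lambda)=\sum_j\binom{j}{2}d_j$, and then read off the product formula by expanding each factor as a geometric series. The paper instead introduces a three-variable generating function $f(q,t,z)=\sum_\lambda q^{\bump(\lambda)}t^{|\lambda|}z^{\lambda_1}$, strips off the first part $\lambda_1$ to obtain the functional equation $f(q,t,z)=\tfrac{1}{1-tz}\,f(q,qt,tz)$, iterates it $n$ times, and then sets $z=1$. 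Your argument is shorter and more transparent: it makes immediately visible why the exponent $\binom{i}{2}$ appears (it is the bump contribution of a single column of height $i$), and indeed the paper exploits exactly this column-by-column observation in the proof of the \emph{next} theorem. The paper's functional-equation route, while heavier here, does set up a framework that tracks $\lambda_1$ separately and so could accommodate refinements where that extra variable matters.
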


\begin{proof}
Let $\Part$ be the set of all integer partitions $\lambda$ and let $\Part_a$ be those partitions $\lambda$ whose largest part $\lambda_1=a$.
Define the generating function
$$f(q,t,z) := \sum_{\lambda \in \Part\atop \lambda=(\lambda_1,\lambda_2,\ldots)} q^{\bump(\lambda)} t^{|\lambda|} z^{\lambda_1}.$$
Notice that every $\lambda \in \Part_{\ell}$ can be written uniquely as a pair $\unpair{\ell,\mu}$ where 
$\mu \in \Part_k$ for some $k \leq \ell$.
This decomposition allows us to write $\bump(\lambda) = (|\lambda|-\ell) +\bump(\mu) = |\mu|+\bump(\mu)$ and $|\lambda| = \ell+|\mu|$.
Using this we have
\begin{align*}
f(q,t,z) &= \sum_{\ell \geq 0} z^{\ell} \sum_{\lambda \in \Part_{\ell}} q^{\bump(\lambda)} t^{|\lambda|} \\
&= \sum_{\ell \geq 0} z^{\ell}  \sum_{\lambda = \unpair{\ell,\mu} \atop \mu \in \Part_k ~:~ k\leq \ell} q^{\bump(\lambda)} t^{|\lambda|} \\
&= \sum_{\ell \geq 0} z^{\ell} \sum_{k=0}^{\ell} \sum_{\mu \in \Part_k} q^{|\mu| +\bump(\mu)} t^{\ell+|\mu|} \\
&=\sum_{k \geq 0} \sum_{\mu \in \Part_k} q^{\bump(\mu)} (qt)^{|\mu|} 
\sum_{\ell \geq k} (tz)^{\ell} 
\end{align*}
The inner sum is $(tz)^k / (1-tz)$, and so 
\begin{align*}
f(q,t,z) 
&=\dfrac{1}{1-tz} \sum_{k \geq 0} (tz)^k \sum_{\mu \in \Part_k} q^{\bump(\mu)} (qt)^{|\mu|} 
=\dfrac{1}{1-tz} f(q,qt,tz).
\end{align*} 
Iterating this equation gives
$$f(q,t,z) = \dfrac{1}{1-tz} \dfrac{1}{1-qt^2z}  f(q,q^2t,qt^2z) =  \dfrac{1}{1-tz} \dfrac{1}{1-qt^2z} \dfrac{1}{1-q^3 t^3 z} f(q,q^3t,q^3 t^3 z) .$$ 
We can do this $n$ times {to obtain}:
$$f(q,t,z) = \left( \prod_{i=1}^{n} \dfrac{1}{\left(1-q^{\binom{i}{2}}t^i z\right)} \right) f(q,q^nt,q^{\binom{n}{2}}t^n z).$$
Setting $z=1$ gives us the following expression for $T_n(q)$:
\begin{align*}
 T_n(q) &= [t^n] ~ \prod_{i=1}^{n} \dfrac{1}{1-q^{\binom{i}{2}}t^i }.\qedhere
\end{align*}
\end{proof}

When one looks at the first dozen polynomials $T_n(q)$, it will become apparent that the head of each of these polynomials begins to remain constant as $n$ increases. 
This convergence of the head of the polynomial is made precise in the following theorem.

\begin{theorem}
For all $n,j \geq 0$, $t_{n+j,n} \leq t_{n+j+1,n}$.
Also, for $i\in\{0,1,\dots,\floor{ n/2} \}$, $$t_{n,i}=[z^{i}]\displaystyle\dfrac{1}{\displaystyle\prod_{k\geq 2}\left(1 - z^{\binom{k}{2}}\right)}.$$
\end{theorem}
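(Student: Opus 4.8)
The plan is to reduce both assertions to the product formula for $\sum_n T_n(q)t^n$ just established, together with a single elementary inequality comparing the ``bump-weight'' of a partition having all parts of size $\geq 2$ to its size. The one observation driving everything is that a part of size $1$ contributes $\binom{1}{2}=0$ to $\bump$, so parts of size $1$ affect $|\lambda|$ but not $\bump(\lambda)$.

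First I would note that a factor $1/(1-q^{\binom{i}{2}}t^i)$ with $i>n$ contributes only its constant term $1$ to $[t^n]$, so the previous theorem is equivalent to $\sum_{n\geq 0}T_n(q)t^n=\prod_{i\geq 1}1/(1-q^{\binom{i}{2}}t^i)$. Splitting off the $i=1$ factor, which equals $1/(1-t)$, this becomes
$$\sum_{n,i\geq 0}t_{n,i}\,q^i t^n=\frac{1}{1-t}\prod_{k\geq 2}\frac{1}{1-q^{\binom{k}{2}}t^k}=\frac{1}{1-t}\sum_{m,i\geq 0}d_{m,i}\,q^i t^m,$$
where I set $d_{m,i}:=[q^i t^m]\prod_{k\geq 2}1/(1-q^{\binom{k}{2}}t^k)$, the number of partitions of $m$ into parts of size at least $2$ for which the sum of $\binom{k}{2}$ over the parts $k$ equals $i$ (combinatorially this is $\bump$ read off columns, since Proposition~\ref{bumpinshape} gives $\bump(\lambda)=\sum_j\binom{\lambda'_j}{2}$, but one does not strictly need that interpretation). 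Comparing coefficients of $q^i t^n$ yields the key identity $t_{n,i}=\sum_{m=0}^{n}d_{m,i}$.

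From here the first assertion is immediate: $t_{n+1,i}-t_{n,i}=d_{n+1,i}\geq 0$, so for every fixed second index the sequence is non-decreasing in the first index, and specialising to $t_{n+j,n}$ against $t_{n+j+1,n}$ gives the claim. For the second assertion I would establish the termwise bound $k\leq 2\binom{k}{2}$ for all $k\geq 2$ (equivalently $k-1\geq 1$), which forces any partition $\nu$ with all parts $\geq 2$ to satisfy $|\nu|\leq 2\sum_j\binom{\nu_j}{2}$, and hence $d_{m,i}=0$ whenever $m>2i$. Thus as soon as $n\geq 2i$ the finite sum $\sum_{m=0}^{n}d_{m,i}$ has already absorbed every nonzero term, so it equals $\sum_{m\geq 0}d_{m,i}$, which is the value at $t=1$ of $\prod_{k\geq 2}1/(1-q^{\binom{k}{2}}t^k)$, namely $[z^i]\prod_{k\geq 2}1/(1-z^{\binom{k}{2}})$. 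Since $i\leq\floor{n/2}$ forces $2i\leq n$, this is precisely the asserted formula.

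There is no deep obstacle here: the only points needing care are the bookkeeping — justifying the passage from the finite product to the infinite one, and keeping the two distinct weightings (contribution to size versus contribution to bump) apart when introducing $d_{m,i}$ — and isolating the clean inequality $k\leq 2\binom{k}{2}$ that makes $d_{m,i}$ vanish for $m>2i$. Everything else is formal coefficient extraction from the generating function already in hand.
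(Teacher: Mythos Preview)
Your argument is correct. Both proofs rest on the same two observations — that $\lambda_1$ (equivalently, parts of size $1$ in the conjugate) can be varied freely without changing $\bump$, and that stabilisation occurs at $n=2i$ — but you package them differently. The paper argues directly on partitions: it builds the monotonicity injection by incrementing $\lambda_1$, and for the stabilisation it bounds $\sum_{i\geq2}\lambda_i\leq k$ and locates the extremal tail $(k,k)$ by hand, only afterwards invoking the column decomposition $\bump(\lambda)=\sum_j\binom{\lambda'_j}{2}$ to identify the limiting coefficients. You instead work entirely inside the product formula of the previous theorem: splitting off the $i=1$ factor $1/(1-t)$ gives the cumulative-sum identity $t_{n,i}=\sum_{m\leq n}d_{m,i}$ in one stroke, from which monotonicity is immediate, and the stabilisation bound becomes the clean termwise inequality $k\leq 2\binom{k}{2}$. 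Your route is a little slicker and makes the dependence on the earlier generating-function result explicit, whereas the paper's version is self-contained at the partition level and perhaps more transparent about \emph{which} shape ($\lambda=(k,k)$) is the last to appear.
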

    
Note that the coefficients of the first $\floor{ n/2}$ powers are independent of $n$.

\begin{proof}
Recall that for a shape $\lambda$, we have $\bump(\lambda)=\sum_{i \geq 2}(i-1)\lambda_{i}$ and the entries of $\lambda$ are weakly decreasing.
For a given $n$, suppose there exists a solution $\lambda = (\lambda_{1}=x_{1},\lambda_{2}=x_{2},\lambda_{3}=x_{3},\ldots) \vdash n$ 
to the equation $\sum_{i \geq 2}(i-1)\lambda_{i}=k$. 
Then, for $m > n$, we have $(\lambda_{1}=x_{1}+m-n,\lambda_{2}=x_{2},\ldots) \vdash m$ is also a solution, which clearly has the same bump statistic as $\lambda$.
Hence $t_{n,k} \leq t_{m,k}$ and we have proven the first claim in the theorem.

Secondly, if $\sum_{i \geq 2}(i-1)\lambda_{i}=k$ then $\sum_{i \geq 2}\lambda_{i} \leq k$ while $\sum_{i \geq 2}\lambda_{i}=k$ iff $\lambda_{2}=k$ (and $\lambda_{i}=0$ for all $i>2$). This shows that as $n$ increases we will find a new, and final, solution to $\bump(\lambda)=k$ precisely when $n=2k$ because with $n=2k$ we have $\lambda=(k,k,0,0,\dots) \vdash n$.

To complete the proof, we note that $\bump(\lambda)$ can be thought of as the sum of the number of times that each value in the cells of the final tableau was bumped during the RS process. For any column, $\lambda_{j}'$, the contribution to $\bump(\lambda)$ from the values in the cells of the column is, reading down the column, $0+1+2+\dots+(|\lambda_{j}'|-1)=\binom{|\lambda_{j}'|}{2}$, i.e. a triangular number. So each solution to $\bump(\lambda)=k$ is a sum of triangular numbers. And vice versa, if a sum of triangular numbers is equal to $k$, then in ordering those numbers to be weakly descending they correspond to a shape $\lambda$ that is a solution.
\end{proof}

\end{document}